\begin{document}

\title{The Global Cohen-Lenstra Heuristic}
\author{Johannes Lengler}
\email{johannes.lengler@math.uni-sb.de}
\address{Mathematisches Institut\\
Universitaet des Saarlandes\\
Saarbruecken, Germany}
%
%
\classification{20K01  11R29, 11R45 (primary), 20K30, 11R11, 28A12, 15A33  (secondary).
}
\keywords{finite abelian group, Cohen-Lenstra measure, probability measure, statistic behaviour of class groups of number fields}

\begin{abstract}
The Cohen-Lenstra heuristic is a universal principle that assigns to each group a probability that tells how often this group should occur ``in nature''. The most important, but not the only, applications are sequences of class groups, which behave like random sequences of groups with respect to the so-called Cohen-Lenstra probability measure. 

So far, it was only possible to define this probability measure for finite abelian $p$-groups. We prove that it is also possible to define an analogous probability measure on the set of \emph{all} finite abelian groups when restricting to the $\Sigma$-algebra on the set of all finite abelian groups that is generated by uniform properties, thereby solving a problem that was open since 1984.

\end{abstract}

\maketitle


\section{Introduction}
\label{sec:introduction}


In the last decades, a method has gained ever-increasing influence which treats deterministic objects as if they were random objects and studies them with probability theoretic means. A major breakthrough for this method came in 1984, when Henri Cohen and Hendrik W. Lenstra noticed that the sequence of class groups of quadratic number fields seems to behave essentially like a random sequence with respect to a certain probability distribution on the space of all finite abelian groups. 

The distribution is determined by the requirement that the measure of a group should be inversely proportional to the size of its automorphism group.

Later on, it turned out that this distribution occurs also in many other contexts and plays the role of a ``natural'' distribution, regulating the structure of finite abelian groups in all situations where no obvious structural obstacles for a random-like behaviour exist. 

The consequences of such random behaviour are immense. E.g., it implies that each group appears with a positive density. Cohen and Lenstra conjectured this for the class groups of imaginary quadratic number fields (excluding the $2$-part). The conjecture is still unproven, and in fact, it is even unknown whether there are groups that appear infinitely often as such class groups.\bigskip

Unfortunately, so far it was only possible to define probability distributions for ``local'' groups, i.e., for $p$-groups where $p$ is some fixed prime. In a global setting, researchers needed to restrict themselves to contents, with a variety of problems arising (discussed in section \ref{sect:Doingnothing}).

In this paper we show that it is possible to define a global probability measure which is compatible with the Cohen-Lenstra heuristic by restricting the measure to a (still rich) set of measurable sets of finite abelian groups. The $\Sigma$-algebra of measurable sets is generated by sets defined via ``uniform properties'' (definition \ref{def:localprop}). The measure is given by explicit formulas and may be easily computed. In my eyes, this is a very satisfactory answer to the problems that were latently present in the research community for the last 25 years.\bigskip

The structure of the paper is as follows: The remainder of this section provides preliminaries and notation, including an introduction to the local Cohen-Lenstra probability measure. Furthermore, I will explain which problems arise when we try to transfer the local heuristic to the global case. In section 2, we will study the global theory. First, we will revise the classical approaches using densities and contents, and their drawbacks. I will make a proposal for a general notion of a Cohen-Lenstra content, independent of the specific application.

In the second part of section 2, we will introduce the notion of uniform properties (definition \ref{def:localprop}), define the global Cohen-Lenstra measure (definition \ref{def:globalmeasure}) and state the main theorem that this yields indeed a measure (theorem \ref{thm:globalmeasure}). The proof of the main theorem is complicated -- all of section 3 is devoted to it. In section 4, we study some extensions and variations of the global Cohen-Lenstra measure. Finally, in section 5 we show that if we combine the global measure with the classical approaches then it loses the measure properties, but stays restrictedly countably additive.



\subsection{Preliminaries and notation}

By $\PP$, I denote the set of all integer primes. 

Throughout the paper, I will only consider finite abelian groups. For brevity, we will write ``group'' to mean ``finite abelian group'', and ``$p$-group'' to mean ``finite abelian $p$-primary group'', i.e., a group with order a power of $p$. Furthermore, we will consider groups only up to ismomorphism, so a phrase like ``sum over all finite abelian groups'' really means that the sum runs over all isomorphism classes of finite abelian groups.

$\GG$ is the set of all (isomorphism classes of) finite abelian groups, and $\GG_p$ is the set of all (isomorphism classes of) finite abelian $p$-groups, for a prime $p$. 

For a finite set $M$, I will denote its cardinality by $\#M$.\medskip

For a finite abelian group $G$, we write $\Aut(G)$ for its automorphism group. The \emph{order $\ord(G)$} is the number of elements of $G$, the \emph{rank $\rk(G)$} is the minimal number of generators. The \emph{exponent $\exp(G)$} is the minimal integer $n>0$ such that $n\cdot G = \{0\}$.\medskip

A finite abelian $p$-group can be uniquely (up to isomorphism) written in the form

$$\prod_{i=1}^k (\ZZ/p^{e_i})^{r_i},$$
where $k\in \NN$, $e_i,r_i \in \NN^+$ for all $i$, and where $e_1>e_2>\ldots > e_k$.

So the quantities $k$, $e_i$, and $r_i$ determine the group. A collection of such quantities is called an \emph{integer partition}, and we call the set of all partitions $\GG_P$. Each partition corresponds to a way to write an integer $n$ as a sum of positive integers, up to order of summation (see \cite{Len09} or \cite{AE04} for details). By definition, for any prime $p$ we have a canonical bijection $\GG_{P} \stackrel{\cong}{\rightarrow} \GG_{p}$. 

We will need one more lemma about the number of partitions of a given size (and hence, the number of $p$-groups of a given order):

\begin{lemma}\label{lem:partgrowth}
The number $a(n)$ of partitions of $n$ satisfies $a(n) \leq F_{n+1}$, where $F_k$ denotes the $k$-th Fibonacci number, given by $F_1:=F_2:=1$ and $F_{n+1} := F_n+F_{n-1}$.

In particular, $a(n) \in O(\phi^n)$, where $\phi = \frac{1+\sqrt{5}}{2}$ is the golden ratio.
\end{lemma}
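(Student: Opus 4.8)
The plan is to prove the bound $a(n) \leq F_{n+1}$ by induction on $n$, using a recurrence for the partition count. The key observation is that partitions of $n$ can be split according to whether or not they contain a part equal to $1$. A partition of $n$ \emph{without} any part equal to $1$ has all parts $\geq 2$; subtracting $1$ from exactly one part (say the smallest) gives a partition of $n-1$, but this correspondence is not quite a bijection, so I would instead use the cleaner split: partitions of $n$ that contain at least one part equal to $1$ are in bijection with partitions of $n-1$ (just delete one copy of the part $1$), while partitions of $n$ with no part equal to $1$ are in bijection with partitions of $n$ into parts $\geq 2$. The latter are bounded by $a(n-2)$, since mapping such a partition to the partition of $n-2$ obtained by decreasing one part by $2$ (or, more robustly, noting each partition into parts $\geq 2$ subtracting $2$ from one fixed part) gives an injection into partitions of $n-2$.

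Assembling these two cases yields the inequality $a(n) \leq a(n-1) + a(n-2)$. This is exactly the Fibonacci recurrence as an inequality, so once the base cases are checked the result follows immediately by strong induction. I would verify the base cases directly: $a(1) = 1 = F_2$ and $a(2) = 2 = F_3$ (the partitions of $2$ being $2$ and $1+1$), which anchor the induction. Then for $n \geq 3$, assuming $a(n-1) \leq F_n$ and $a(n-2) \leq F_{n-1}$, the recurrence gives
\[
a(n) \leq a(n-1) + a(n-2) \leq F_n + F_{n-1} = F_{n+1},
\]
completing the inductive step.

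The main obstacle, and the only step requiring genuine care, is establishing the recurrence inequality $a(n) \leq a(n-1) + a(n-2)$ rigorously, specifically the bound that the number of partitions of $n$ into parts all $\geq 2$ is at most $a(n-2)$. The subtlety is making the injection into partitions of $n-2$ well-defined: decreasing the largest part by $2$ always produces a valid partition of $n-2$ (possibly with a zero part to discard, or a part that merges), and this map is injective because the original partition can be recovered. I would be careful to phrase this so that it holds even in edge cases (for instance when the partition is a single part $n \geq 2$, which maps to the single part $n-2$, or $n=2,3$ where the part-$\geq 2$ partitions are few).

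Finally, the asymptotic claim $a(n) \in O(\phi^n)$ follows from the closed-form (Binet) expression $F_{n+1} = \tfrac{1}{\sqrt{5}}(\phi^{n+1} - \psi^{n+1})$ with $\psi = \tfrac{1-\sqrt5}{2}$ satisfying $|\psi| < 1$, so $F_{n+1} \sim \tfrac{\phi}{\sqrt5}\,\phi^n$, which is $O(\phi^n)$; combined with $a(n) \leq F_{n+1}$ this gives the stated big-$O$ bound.
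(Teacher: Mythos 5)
Your overall strategy --- establish $a(n) \leq a(n-1) + a(n-2)$, compare with the Fibonacci recurrence, and finish by induction plus Binet's formula --- is sound, and since the paper itself gives no argument at all (it merely cites \cite[3.3]{AE04}), supplying one is reasonable. However, the one step you yourself single out as requiring genuine care is exactly where the argument breaks: the map ``decrease the largest part by $2$'' from partitions of $n$ with all parts $\geq 2$ to partitions of $n-2$ is \emph{not} injective. For $n=6$ the partitions $(4,2)$ and $(2,2,2)$ both land on $(2,2)$: in the first case the largest part $4$ becomes $2$ and merges with the existing part $2$; in the second the largest part $2$ becomes $0$ and is discarded. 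Given the image $(2,2)$ you cannot tell whether to restore the preimage by adding $2$ to an existing part or by appending a brand-new part of size $2$, so the claim that ``the original partition can be recovered'' fails, and with it the bound on the number of partitions into parts $\geq 2$.

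The inequality you want is nevertheless true, and a correct injection is only a small modification away: replace the largest part $\lambda_1 \geq 2$ by $\lambda_1 - 2$ parts equal to $1$. This produces a partition of $n-2$, and since every remaining part of the original partition is $\geq 2$, the parts equal to $1$ in the image are precisely the newly created ones; counting them recovers $\lambda_1$, and deleting them and re-inserting $\lambda_1$ recovers the original partition, so the map is injective. (Equivalently: removing the largest part entirely shows that the number $q(n)$ of partitions of $n$ into parts $\geq 2$ satisfies $q(n) \leq \sum_{m=0}^{n-2} q(m)$, and this sum equals $a(n-2)$ because stripping the parts equal to $1$ from a partition of $n-2$ leaves a partition of some $m \leq n-2$ into parts $\geq 2$.) With this repair the recurrence inequality holds for $n \geq 2$, your base cases and strong induction go through unchanged, and the $O(\phi^n)$ statement follows from $F_{n+1} \leq \phi^n$ (or from Binet) as you say.
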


\begin{proof}
\cite[3.3]{AE04}.
\end{proof}


\subsection{The local Cohen-Lenstra heuristic}

Let me first give a probabilistic formulation of the Cohen-Lenstra heuristic:

\emph{Let $p$ be a prime. Assume we have a ``natural'', unbiased stochastic process producing finite abelian $p$-groups. If we fix a finite abelian $p$-group $G$ then the probability that an output of the process is isomorphic to $G$ is inversely proportional to the size of its automorphism group $\Aut(G)$.}\index{Cohen-Lenstra heuristic!local}

In this formulation, the heuristic is not a theorem but rather a meta-principle. It first became popular by the famous paper \cite{CL83} of Henri Cohen and Hendrik W. Lenstra. In honor to this paper I call the principle ``Cohen-Lenstra heuristic'' or ``Cohen-Lenstra principle''. In their paper they claimed (without proof, but with some evidence) that the sequence of $p$-parts of class groups of imaginary quadratic number fields (which is a deterministic sequence!) behaves essentially like a random sequence in the above sense, for $p\neq 2$.

In the definition above, ``unbiased'' is not a precise term but rather means that we do not allow obvious obstacles. For example, there might well be stochastic processes that produce only cyclic groups. Or some that produce only groups of rank at most $2$ (as is the case for the point group of elliptic curves over various finite fields). Such processes may well be modelled via a probabilistic approach (done so for the elliptic curves in \cite{Gek06}), but the probability distribution is clearly biased.

The sequence of class groups of number fields is the most famous application of the Cohen-Lenstra heuristic --- not only for quadratic extensions of $\QQ$, but also much more general number field extensions are seemingly governed by similar heuristics, which may be derived from the Cohen-Lenstra heuristic. Note that apart from some special cases, all statements are conjectural but are supported by strong numerical and theoretical evidence. You may consult \cite{Mal06} or \cite{Len09} for details. 

However, class groups of number fields are by far not the only application. The conjectures may be transferred to function field extensions, and in this setting, Pacelli \cite{Pac04}, Gekeler \cite{Gek06}, Achter \cite{Ach06}, and other researchers could prove some of the conjectures --- see \cite{Ach06} for an overview. Furthermore, there are completely different applications such as generating a (finite abelian) $p$-group by choosing generators and imposing random relations on them with respect to some canonical Haar measure --- or such as the size of conjugacy classes of the general linear group $\GL(n,p)$ (cf. \cite{Len09}).\bigskip

Returning to the above formulation of the Cohen-Lenstra heuristic, you may have noticed that it refers only to $p$-groups, not to general (finite abelian) groups. This is due to the following reason: Let $w_p$ be the measure on the set $\GG_p$ which is defined via

$$w_p(\{G\}) = \frac{1}{\#\Aut(G)}\qquad \text{for all one-element sets $\{G\}\subset \GG_p$}.$$ 

We write for short $w(G)$ instead of $w(\{G\})$.

Then $w_p$ is well-defined because $\GG_p$ is countable. We call $w_p$ the (local) \emph{Cohen-Lenstra weight}. However, we want to scale $w_p$ into a probability measure. This is only possible if the total measure of $\GG_p$ is finite. Fortunately, this is the case. Cohen and Lenstra have proven \cite{CL83} that

$$w_p(\GG_p) = \prod_{i=1}^{\infty} (1-p^{-i})^{-1} < \infty.$$

Our heuristic tells us that the probability for each group $G$ is supposed to be proportional to $\#\Aut(G)^{-1}$, so it must be 

$$P_p(G) = \frac{w_p(G)}{w_p(\GG_p)} = \frac{1}{\#\Aut(G)}\prod_{i=1}^{\infty} (1-p^{-i}),$$

which induces a well-defined probability measure on $\GG_p$.\bigskip

Now that we have understood the local setting, we may turn to non-primary groups.

\subsection{The global Cohen-Lenstra heuristic}

We have seen how the Cohen-Lenstra principle leads to a probability distribution on the set of all finite abelian $p$-groups, for arbitrary $p\in \PP$. However, being a $p$-group is a restriction we would like to remove. Often we deal with non-primary groups, e.g., the class group of a number field or the Jacobian of a hyperelliptic curve.

But when we try to transfer the above process for $p$-groups to non-primary groups, we find that $w(\GG) =\infty$, so our approach fails to give us a probability measure. Moreover, we cannot transfer the local heuristic to the global case. In fact, \emph{for general finite abelian groups, there is no probability distribution that would allow us to perform a stochastic process generating random sequences of groups that is compatible with the Cohen-Lenstra heuristic}, as we can do when restricting to $p$-groups. So we can not compare a sequence of groups with a random sequence because there is no adequate stochastic process that could generate such a random sequence.

%
%

In this paper we will discuss a way still to assign probabilities to certain events. We imitates the definition of the Lebesgue measure on $\RR^n$. In real analysis, constructions like the Banach-Tarski-paradox (originally in \cite{BT24}; for a more recent treatment see \cite{Wag93}) show that there is no equivariant measure on the power set of $\RR^n$. The solution is to designate only some $\sigma$-algebra of sets as \emph{measurable} and to define the measure only on those. We copy this approach by defining \emph{uniform properties} and designating these as a basis for the $\sigma$-algebra. In my eyes, this is a quite satisfactory solution.



This is the first time that a probability measure is introduced in the context of the global Cohen-Lenstra heuristic. In the literature, people used densities instead. However, this approach has severe theoretical and practical drawbacks, which will be discussed in section \ref{sect:Doingnothing}.\bigskip

Let me add some remarks about terminology: Cohen and Lenstra speak of probabilities, although they are only talking about contents (cf. def. \ref{def:content} below), and they are well aware of this terminological slackness. I will not use the term ``probability'' in a context where we do not have a probability measure -- therefore, my terminology is different from the one of Cohen and Lenstra. When I talk about their concept, I use the words ``content'' or ``density''.  Further, I use the word ``heuristic'' to refer to any one of the above concepts, so a ``heuristic'' is not a precise mathematical concept.

It would be very convenient to write down a definition of ``the'' Cohen-Lenstra content. Unfortunately, such a definition does not exist. (This is one of the circumstances that necessitate this chapter!) Rather, the precise definitions in the literature (which still include imprecise terms like ``reasonable functions'') work with the concept of densities (section \ref{sect:Doingnothing}) and always depend on the specific application. For different applications, one gets different densities: they differ in the set of ``measurable'' sets, but even if one set is assigned a content in several settings, these contents need not agree. These problems are discussed in more detail in section \ref{sect:Doingnothing}. I will define a global content in definition \ref{def:globalCLmeasure1} as my personal proposal of a theoretically sound content, but you should be aware that in the literature there is no agreement on what a ``Cohen-Lenstra content'' should be (at least if you want it to be independent of the specific application). 

Opposed to that, when I talk about ``the global Cohen-Lenstra measure'' or about ``the global Cohen-Lenstra probability'', I mean the probability measure that I define in \ref{def:globalmeasure}. Its existence is the central insight of this paper, and section \ref{sect:globalmeasure} is devoted to studying this measure.

\section{Global theory}\label{chap:globaltheory}

\subsection{Global contents}\label{sect:content}

Before we start, let me repeat some basic notions from measure theory.

\begin{definition}\label{def:content}
An \emph{algebra} of sets over some set $X$ is a set $\AAA$ of subsets of $X$ that is closed under complements, finite unions and finite intersections, and with $\emptyset \in \AAA$.

A \emph{$\sigma$-algebra} is an algebra that is also closed under countable unions and intersections. We usually denote $\sigma$-algebras by $\Sigma$.

A \emph{content} on an algebra $\AAA$ is a map $\mu: \AAA \to \RR\cup\{\infty\}$ such that
\begin{itemize}
\item $\mu(\emptyset) = 0$.
\item $\mu(A) \geq 0$ for all $A \in \AAA$.
\item $\mu(A_1 \cup A_2) = \mu(A_1) + \mu(A_2)$ for all disjoint $A_1,A_2\in \AAA$.
\end{itemize}

We will usually further assume that $\mu(X) = 1$.

A content that is defined on a $\sigma$-algebra is called a \emph{measure} if it is furthermore countably additive. If $\mu(X) = 1$, it is called a \emph{probability measure}.
\end{definition}

\begin{remark}
In the literature, contents are more often referred to as \emph{finite additive measures}. I have not adopted this notion because it suggests that finite additive measure are measures, which is not true in general.
\end{remark}

Before coming to the different methods of defining contents, let me first illustrate the problems we face when we try to define a global probability measure. So assume we are more ambitious and want to construct a measure instead of a content.

What properties should a global probability measure have? Note that for any $p$, there is a natural projection $\GG \stackrel{\pi_p}{\to} \GG_p$. We would like our probability to be compatible with these maps, i.e., for any $M \subseteq \GG_p$ we would like to have $P(\pi_p^{-1}(M)) = P_p(M)$. ($P_p$ is the local Cohen-Lenstra probability on $\GG_p$.)
%
%

Moreover, the $p$-parts of each group should be independent (as the automorphism group of a group decomposes into a direct product of the automorphism groups of its $p$-parts), i.e., for finitely many mutually distinct primes $p_1, \ldots, p_k$ and sets $M_i \subseteq \GG_{p_i}$, $1\leq i \leq k$ we require 
\begin{equation}
P\left(\bigcap_{i}\pi_{p_i}^{-1}(M_i)\right)  = \prod_iP_{p_i}(M_i).\label{eq:independence}
\end{equation}

So the first attempt would be to define $\Sigma$ as the coarsest $\sigma$-algebra that contains all $\pi_p^{-1}(M)$ for all primes $p$ and $M\subseteq \GG_p$, and to define the probabilities via the product formula.\label{plain:measurelocalstuff}

Unfortunately, this does not lead to a measure: Obviously we can describe every group $G \in \GG$ by specifying each of its $p$-parts. Since a measure is defined on a $\sigma$-algebra, the set $\{G\}$ would be measurable as a countable intersection of measurable sets, and by an easy calculation it would have measure $0$. But since $\GG$ is countable, we would get the contradiction 
$$1 = P(\GG) = P\left(\bigcup_{G\in \GG} \{G\}\right) = \sum_{G\in \GG}P(\{G\}) = 0.$$

Note that this argument shows that $\Sigma$ is the whole power set of $\GG$.

%

We see that it is difficult to find a measure which is compatible with the local Cohen-Lenstra measures, although we will finally succeed in section \ref{sect:globalmeasure}. Before I come to this measure, let us discuss the alternatives. In the following sections, I illustrate several ways of defining contents instead of measures. However, we will also find that all these alternatives have severe drawbacks.

\subsubsection{Densities}\label{sect:Doingnothing}

Cohen and Lenstra tried to avoid the problems illustrated above in the following way: They were interested in a very concrete sequence of finite abelian groups (the sequence of non-$2$-parts of class groups of imaginary quadratic number fields). For us, the concrete sequence is of no importance, so let $(G_n)$ be a sequence of finite abelian groups. Let $D$ be the set of all subsets $S\subseteq \GG$ which have a density in $(G_n)$, i.e., all $S$ for which the limit\index{density}

$$\lim_{n\to \infty} \frac{\#\{k \leq n \mid G_k \in S\}}{n}$$

exists. Then $D$ is an algebra of sets, and the limits define a content on $D$. This approach is copied by almost all currently active researchers. It has the philosophical drawback that we cannot speak of probabilities, and the practical drawback that we usually do not know $D$. Furthermore, it is at least annoying that we do not have countable additivity. But there are also much more severe obstacles.

Of course, we want to decide whether a sequence is compatible with the (local) Cohen-Lenstra distributions. But how do we decide this? In principle we would like $D$ to be ``reasonably'' rich, and that the densities of sets $S \in D$ are compatible with the Cohen-Lenstra heuristic.

But what does ``compatible'' really mean? Often, researchers are only concerned with very special sets $S$, in particular sets that are direct products $\prod_{p\in\PP}S_p$, for sets $S_p \in \GG_p$. Then they declare the Cohen-Lenstra probability to be $\prod_{p\in\PP}P_p(S_p)$. This sounds quite reasonable, but in this way there is no hope whatsoever to gain countable additivity, as is proven in section \ref{sect:globalquant}.

If we are given such a direct product set $S$, are there other ways to define a ``Cohen-Lenstra probability'' for $S$? The answer is yes! We have two different limit processes going on: One in the definition of the local Cohen-Lenstra probability, where we average over all $p$-groups. And another one when we multiply the probabilities for various primes. Assigning the probabilities $\prod_{p\in\PP}P_p(S_p)$ to a set $S$ as above imposes an order on the limit process. Moreover, by what we have already shown, the limits do not commute! So we might with equal legitimation compute the double limit in a different way, and obtain a different ``Cohen-Lenstra probability'' for the same set $S$. This is highly unsatisfactory.

Another point is that for every sequence $(G_n)$ we get a different content. Even if we would accept the order of the limit process for special sets $S= \prod_{p\in\PP}S_p$, then it is not clear at all how to extend this to the whole power set of $\GG$. For a set $S$ which does not happen to be a direct product, there are many ways that lead to different contents for $S$, and we do not have a canonical way of choosing the ``right'' one. Thus for each sequence of groups, we would have to figure out the sets with densities and make up a new content on these sets. For different sequences of groups, the contents would in general not be compatible.

A related approach, which appears to be a bit less critical, is to define a content $P(S)$ for any set $S\subseteq \GG$ for which the following limit exists:

$$P(S):=\lim_{x\to\infty}\frac{\sum_{G\in S,|G|<x} w(G)}{\sum_{|G|<x} w(G)},$$

where $w(G) = \frac{1}{\#\Aut(G)}$ is the Cohen-Lenstra weight of $G$.

This yields a content. Basically, the approach imposes an ordering onto $\GG$, namely by their size, and then sums up over all groups up to a certain threshold. This sounds very natural, but still it is a specific ordering. It corresponds to taking the density with respect to the sequence where the group of order $1$ appears an appropriate number of times, then the group of order $2$ appears, and so on. This analogy is not perfect, because it is only possible to construct the sequence for every finite start sequence $\{G\in\GG \mid \ord(G) \leq x\}$ of the ordering.\footnote{In order to extend the sequence, we need to adjust the number of order-1-groups, order-2-groups, \ldots\ in order to get an integral number of appearances. E.g., in order to simulate the weights $(1)$, $(1,\frac{1}{2})$, and $(1,\frac{1}{2},\frac{1}{3})$, we must take the sequences $(G_1)$, $(G_1,G_1,G_2)$, and $(G_1,G_1,G_1,G_1,G_1,G_1,G_2,G_2,G_2,G_3,G_3)$, respectively. So we do not simply add groups
 to the tail of the sequence when considering additional weights.} Nevertheless, in my eyes the analogy catches the essential point: There is no real reason to impose this specific ordering on $\GG$, and it is not clear why a truly random sequence should respect this specific ordering.

Furthermore, is the ordering above  really the most natural ordering? Or would it perhaps be more natural to order the groups by their weight? This would give a different content, and so it would be a matter of taste which content one prefers. We see that this situation is quite unsatisfactory. 

Finally, by this approach, we do not have any hope to get a measure. Clearly, every one-element set $S=\{G\}$ is measurable with measure $0$, which already rules out countable additivity. 

Summarizing, the illustrated approaches only postpone the problems -- the reason why they have worked so far is that only a very limited type of sets $S$ has been investigated, and that often the researcher concentrates on only one specific sequence of groups and does not care about other sequences. Cohen and Lenstra were well aware of the problem (that is why they did not specify what a ``reasonable function'' \cite[8.1]{CL83} should be), but apparently they saw no way to avoid it.


We have seen that we need a general notion for sequences of groups to be ``compatible'' with the Cohen-Lenstra heuristic. Let us first try to define a content that does not depend on the specific approach. In order to be compatible with the local Cohen-Lenstra measures, we want the algebra of sets to contain all sets of the form $\pi_p^{-1}(M)$, where $p\in\PP$ and $M\subseteq\GG_p$. This leads to the following definition:

\begin{definition}\label{def:globalCLmeasure1}
Let $\AAA$ be the algebra of all subsets $S$ of $\GG$ for which there exists a finite index set $I \subset \PP$ and a set $S_{I}\subseteq \prod_{p\in I} \GG_p$ such that\footnote{the symbol ``$\bigoplus$'' denotes the outer direct sum, by which I simply mean for any index set $I \subseteq \PP$:
$$\bigoplus_{p\in I} \GG_p := \{(G_p)_{p\in I} \in \prod_{p\in I} \GG_p \mid \text{ almost all $G_p$ are $0$}\}.$$
So in particular, 
$$\bigoplus_{p\in \PP} \GG_p \stackrel{\cong}{\longrightarrow} \GG.$$} 
\begin{equation}\label{eq:topology}
S = S_{I} \times \bigoplus_{p\in \PP \setminus I} \GG_p.
\end{equation}

Informally speaking, $S$ is only specified at finitely many local places.

We define the \emph{(global) Cohen-Lenstra content $P$}\index{Cohen-Lenstra content!global} on $\AAA$ via 

$$P(S_{I} \times \bigoplus_{p\in \PP \setminus I} \GG_p) := \sum_{G\in S_I}\prod_{p\in I}P_p(G_p),$$
where $G_p$ denotes the $p$-part of $G$.

I usually omit the attribute ``global'' if no confusion is possible and talk only of the \emph{Cohen-Lenstra content} on $\GG$.
\end{definition}

%
%
%

\begin{theorem}\label{globalCLmeasure}
The global Cohen-Lenstra content is a well-defined content.
\end{theorem}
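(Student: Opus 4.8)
The definition of $P$ depends on a choice of representation $S = S_I \times \bigoplus_{p\in\PP\setminus I}\GG_p$, so the central task---and the only delicate one---is to show that the assigned value does not depend on this choice. The plan is to reduce any two competing representations of the same set $S$ to a common refinement, and to show that enlarging the index set never alters the value. The entire argument rests on the single fact that each local Cohen-Lenstra probability is normalized, $P_p(\GG_p)=1$.

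First I would treat the refinement step. Suppose $I\subseteq I'$ and $S = S_I \times \bigoplus_{p\notin I}\GG_p$. Writing $I' = I\cup\{q_1,\dots,q_m\}$ with the $q_j$ distinct from the primes of $I$, the same $S$ is represented over $I'$ by $S_{I'} = S_I \times \prod_{j}\GG_{q_j}$, and factoring the product while summing each new coordinate separately gives
$$\sum_{G\in S_{I'}}\prod_{p\in I'}P_p(G_p) = \Big(\sum_{G\in S_I}\prod_{p\in I}P_p(G_p)\Big)\prod_{j=1}^{m}\Big(\sum_{G_{q_j}\in\GG_{q_j}}P_{q_j}(G_{q_j})\Big) = \sum_{G\in S_I}\prod_{p\in I}P_p(G_p),$$
since each inner sum equals $P_{q_j}(\GG_{q_j})=1$. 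Here I also note that $\prod_{p\in I}\GG_p$ is a finite product of countable sets, hence countable, so every sum in sight is an absolutely convergent sum of nonnegative terms and the rearrangement is legitimate. Now if $S$ admits representations over index sets $I$ and $J$, both refine to $I' = I\cup J$: the projection $\GG\to\prod_{p\in I'}\GG_p$ carries $S$ onto the uniquely determined set $S_{I'}=\pi_{I'}(S)$---unique because the trivial group lies in every $\GG_p$, so nothing is lost in the complementary coordinates---and by the displayed identity both original values coincide with the value computed over $I'$. This is the step I expect to be the main obstacle, and settling it establishes well-definedness.

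It then remains to check that $\AAA$ is an algebra and that $P$ satisfies the content axioms; both become routine once refinement is available. For closure, complementation preserves the index set since $S^c = S_I^c\times\bigoplus_{p\notin I}\GG_p$, while for two sets with index sets $I$ and $J$ I pass to $I'=I\cup J$ and observe $S\cap T = (S_{I'}\cap T_{I'})\times\bigoplus_{p\notin I'}\GG_p$, and likewise for unions; taking $I=\emptyset$, so that the empty product $\prod_{p\in\emptyset}\GG_p$ is a one-point set, exhibits $\emptyset,\GG\in\AAA$.

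Finally, for the axioms: the empty set is represented by $S_I=\emptyset$, giving the empty sum $P(\emptyset)=0$, and $\GG$ corresponds via $I=\emptyset$ to an empty product, giving $P(\GG)=1$ as required for the normalization. Nonnegativity is immediate, as $P(S)$ is a sum of nonnegative terms, and the same factorization bounds it by $\prod_{p\in I}P_p(\GG_p)=1$, so $P$ is finite-valued. For finite additivity I refine disjoint $A_1,A_2\in\AAA$ to a common $I'$; disjointness forces $A_{1,I'}\cap A_{2,I'}=\emptyset$, so the defining sum over $A_{1,I'}\cup A_{2,I'}$ splits as $P(A_1)+P(A_2)$. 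This exhausts the content axioms.
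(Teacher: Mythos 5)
Your proof is correct and follows essentially the same route as the paper: both reduce everything to the finite product probability space $\prod_{p\in I}\GG_p$ for a suitable finite $I$, with $P_p(\GG_p)=1$ doing all the work. You spell out the well-definedness under refinement of the index set, which the paper treats as evident, but this is a matter of detail rather than of method.
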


\begin{proof}
It is clear that $\AAA$ is an algebra of sets.

By measure theory we know that for any finite $I$ we can endow $\prod_{p\in I} \GG_p$ with a probability measure by defining $P(\{G\}) := \prod_{p\in I}P_p(G_p)$. (Note that this does not work for infinite $I$ because $\GG$ is not the product space but rather the direct sum of the $\GG_p$ -- only for finite $I$ do $\prod_{p\in I} \GG_p$ and $\bigoplus_{p\in I} \GG_p$ agree.) Since any complement and any finite union or finite intersection of sets in $\AAA$ is only specified on a finite set $S$, we can restrict ourselves to a probability space of this kind. So we may restrict ourselves to the power set of $\prod_{p\in I} \GG_p$, where $I$ is some \emph{finite} set of primes. But the finite product of probability spaces is again a probability space, so all formulas then become evident.
\end{proof}

%
%

\subsubsection{Global quantities}\label{sect:globalquant}

What kind of statements would we like to make about groups? We have already seen that we cannot measure all sets of groups. But there are some minimal requirements -- at least to my feeling we should be able to measure the three most important quantities of a finite abelian group: its order, rank and exponent. So we would like the following sets to be measurable for any $n$:

\begin{itemize}
\item $\{G \in \GG \mid \ord(G) = n\}$.
\item $\{G \in \GG \mid \rk(G) = n\}$.
\item $\{G \in \GG \mid \exp(G) = n\}$.
\end{itemize}

Unfortunately, it is impossible to achieve this with a measure. We will prove:

\begin{theorem}\label{thm:globalorderandexponent}
Let $\AAA$ be any algebra on $\GG$ with a content $P$ that is compatible with the Cohen-Lenstra heuristic induced by the projections $\GG \to \GG_p$. If the order or the exponent is measurable, then there exist countably many measurable, pairwise disjoint sets of measure $0$ and with union $\GG$.
\end{theorem}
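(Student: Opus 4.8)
The plan is to produce the partition explicitly: I claim the measurability hypothesis already hands us a natural countable partition of $\GG$ into measurable pieces, and the real work is to show each piece has content $0$. This is precisely what obstructs countable additivity, since a countably additive extension would force $1 = P(\GG) = \sum_n P(\text{piece}_n) = 0$. I would treat the two cases in parallel, as they are structurally identical. If the order is measurable, set $O_n := \{G \in \GG \mid \ord(G) = n\}$; if the exponent is measurable, set $E_n := \{G \in \GG \mid \exp(G) = n\}$. In either case, for $n$ ranging over $\NN^+$ these sets are measurable by hypothesis, pairwise disjoint, and their union is all of $\GG$ (every finite abelian group has a well-defined order and exponent). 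So they already constitute the required countable partition, and it only remains to prove that each has content $0$.

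The key observation is that both conditions $\ord(G) = n$ and $\exp(G) = n$ force the $p$-part $G_p$ to be trivial for every prime $p \nmid n$. Hence, for any finite set $F$ of primes with $F \cap \{p \mid n\} = \emptyset$, we have the inclusion $O_n \subseteq \bigcap_{p \in F} \pi_p^{-1}(\{0\})$ (and likewise for $E_n$). The right-hand side is a finite intersection of cylinder sets, so it lies in $\AAA$, and by compatibility with the projections (equation \ref{eq:independence}) its content equals $\prod_{p \in F} P_p(\{0\}) = \prod_{p \in F} \prod_{i=1}^\infty (1 - p^{-i})$, using $\#\Aut(0)=1$. Monotonicity of the content (which follows from finite additivity and non-negativity) then gives $P(O_n) \leq \prod_{p \in F} \prod_{i=1}^\infty (1 - p^{-i})$ for every such $F$.

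To finish, I would let $F$ exhaust the primes not dividing $n$ and estimate the bound from above. Since $\prod_{i=1}^\infty (1 - p^{-i}) \leq 1 - p^{-1}$ for each $p$, the bound is at most $\prod_{p \in F}(1 - p^{-1})$, and because $\sum_p p^{-1}$ diverges we have $\prod_{p}(1 - p^{-1}) = 0$; removing the finitely many factors with $p \mid n$ (each positive) leaves $\prod_{p \nmid n}(1 - p^{-1}) = 0$. Thus the bound can be made arbitrarily small, forcing $P(O_n) = 0$ (resp. $P(E_n) = 0$).

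The only genuinely delicate point is conceptual rather than computational: since $P$ is merely a content, we cannot evaluate the infinite product $\prod_{p \nmid n} P_p(\{0\})$ directly, as countable multiplicativity is not available. Instead we must bound each piece by \emph{finite} intersections of cylinder sets and pass to the limit through monotonicity, which uses only finite additivity. The divergence $\prod_p (1 - p^{-1}) = 0$ is exactly what drives these finite bounds to zero, and it is precisely the failure of this product to converge to a positive value that makes the global Cohen-Lenstra content irreparably non-$\sigma$-additive once the order or exponent is declared measurable.
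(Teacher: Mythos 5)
Your proposal is correct and follows essentially the same route as the paper's own proof: you bound $P(\{G \mid \ord(G)=n\})$ (resp.\ the exponent fiber) by the content of finite intersections $\bigcap_{p\in F}\pi_p^{-1}(\{0\})$, use independence and the estimate $\prod_{i=1}^\infty(1-p^{-i})\leq 1-p^{-1}$, and let the divergence of $\sum_p p^{-1}$ drive the bound to $0$. The only cosmetic difference is that you treat order and exponent explicitly in parallel, whereas the paper proves the order case and declares the exponent case analogous.
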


Note that we implicitly assume that distinct primes are independent of each other (in the sense of equation (\ref{eq:independence}) on page \pageref{eq:independence}). This is an assumption which is usually made whenever people work with the Cohen-Lenstra philosophy.

\begin{proof}
We will only show the statement for the order. The statement for the exponent can be proven analogously.

For all $n \in \NN$, we can measure the set $S_n := \{G\in \GG \mid \ord(G) = n\}$. We fix an $n$ and define $I_n := \{p \in \PP \mid p \nmid n\}$ and $T_p := \{G\in\GG \mid \pi_p(G) = 0\}$ for all $p \in I_n$. Then $T_p$ is measurable with measure $P(T_p) = P_p(\{0\})= \prod_{i=1}^{\infty}(1-p^{-i}) \leq 1-\frac{1}{p}$. Since $S_n \subseteq T_p$ for all $p\in I_n$, we have for any finite subset $F$ of $I_n$

$$S_n \subseteq \bigcap_{p\in F} T_p.$$

Since $F$ is finite, both sides are measurable and by independence of distinct primes we obtain

$$P(S_n) \leq \prod_{p\in F} P(T_p).$$

The above inequality is true for any finite set $F \subset I_n$, so we may replace the right hand side by the infimum over all such $F$:
\begin{eqnarray*}P(S_n) & \leq & \inf_{F\subset I_n \text{ finite}}\prod_{p\in F} P(T_p) \\
& = & \prod_{p\in I_n} P(T_p) \\
& \leq & \prod_{p\in I_n} (1-\frac{1}{p}) \\
& \leq & \exp\underbrace{\left(\sum_{p\in I_n} (-\frac{1}{p})\right)}_{ = -\infty} \\
& = & 0.\end{eqnarray*}

Therefore, $P(S_n) = 0$ for all $n \in \NN$. But $\GG = \bigcup_{n\in\NN} S_n$, which would imply $P(\GG) = 0$, a contradiction.

\end{proof}

You may wonder why the theorem above only refers to the order and the exponent, but not to the rank. Surprisingly, it turns out that it is even possible to endow $\GG$ with a \emph{probability measure} compatible with the rank. The reason why the rank behaves differently is that it is a uniform quantity in the following sense: If you require the rank of a group $G \in \GG$ to be $r$, then the information that you can extract about the local ranks $r_p$ of $G_p$ is independent of $p$. This seems to be a rather complicated way of saying that essentially the only thing we know for a fixed $p$ is $r_p \leq r$. However, going through the proof of the theorem, this was the crucial point that forbade countable additivity for the order (and the exponent): If we know the order of a group, then we can compute the order of $G_p$ for any $p\in\PP$, so we get \emph{individual} information about local quantities.

This leads us to the definition of the \emph{uniform order} and the \emph{uniform exponent}, which turn out to be better suited for the situation. Afterwards, we will define the notion of \emph{uniform properties} in general.

\subsection{Uniform properties}\label{sect:localquant}

Since we have noticed that the rank behaves better than order and exponent, we want to catch the local behaviour of the rank and transfer it to order and exponent as follows:

\begin{definition}\label{def:localorderrank}
For a prime $p$, we define the \emph{local order on $\GG_p$} as
$$\ord_p(G) := \log_p(\ord(G)).$$

Analogously, we define the \emph{local exponent on $\GG_p$} as
$$\exp_p(G) = \log_p(\min\{n\in\NN^+ \mid n \text{ annihilates G} \}).$$

Now we define the \emph{uniform order $\ord_{uni}$ on $\GG$} and the \emph{uniform exponent $\exp_{uni}$ on $\GG$} as
\begin{eqnarray*}
\ord_{uni}(G) & := & \max_{p\in\PP} \ord_p(G_p)\\
\exp_{uni}(G) & := & \max_{p\in\PP} \exp_p(G_p)
\end{eqnarray*}

\end{definition} 

Note that this definition is completely analogous to the formula 
$$\rk(G)  =  \max_{p\in\PP} \rk(G_p)$$
for the rank. Therefore the ``uniform rank'' coincides with the ordinary rank.

As we will show later, it turns out that there is a probability measure on $\GG$ which allows to measure the uniform order, rank and exponent. So at least we can obtain the minimal program formulated in section \ref{sect:globalquant}, if we work with uniform order, rank, and exponent. But in fact, we can show much more. For this we need a general notion of uniform quantities. For the moment, we restrict ourselves to properties, i.e., to functions $\GG \to \{0,1\}$, telling whether a group has a certain property or not.

\begin{definition}\label{def:localprop}
A \emph{property} (on $\GG$) is a function $E:\GG \to \{0,1\}$. For properties $E_1,E_2$ we define $E_1 \vee E_2$ and $E_1 \wedge E_2$ by 

\begin{eqnarray*}(E_1 \vee E_2)(G) & = & \begin{cases} 1 & \text{ if } E_1(G)=1 \text{ or } E_2(G) = 1, \\ 0 &\text{ otherwise,}\end{cases}, \\
(E_1 \wedge E_2)(G) & = & \begin{cases} 1 & \text{ if } E_1(G)=1 \text{ and } E_2(G) = 1,\\ 0 &\text{ otherwise,}\end{cases}
\end{eqnarray*}

respectively.

A property $E$ is called \emph{uniform} if there is a function, which by abuse of notation we also call $E$, from $\GG_P$ to $\{0,1\}$ such that for all $G\in\GG$

$$E(G) = 1 \text{ if and only if } E(G_p) = 1 \text{ for all } p \in \PP,$$

where $G_p\in\GG_{P}$ corresponds to the $p$-part $G_p$ of $G$ via the identification $\GG_p \stackrel{\cong}{\longleftrightarrow} \GG_{P}$.

If we want to distinguish explicitly between the two functions $E$, then we write $E_{\GG}$ and $E_{\GG_{P}}$, respectively.

Finally, for a uniform property $E$ we define $O(E) := E_{\GG}^{-1}(\{1\})$.
\end{definition}

%
%
%

\begin{remark}~

\begin{itemize}
\item For all uniform properties $E_1$ and $E_2$, we have 
$$O(E_1 \wedge E_2) = O(E_1) \cap O(E_2).$$
\item In general, it is \emph{not} true that $O(E_1 \vee E_2) = O(E_1) \cup O(E_2)$ for local properties $E_1$, $E_2$.
\end{itemize}
\end{remark}

\begin{definition}\label{def:globalmeasure}
Let $\Sigma_{\GG}$ be the coarsest $\sigma$-algebra on $\GG$ that contains the fibers $O(E)$ of all uniform properties $E$ of $\GG$. We define the \emph{Cohen-Lenstra probability measure} $P_{\GG}$ on $\Sigma_{\GG}$ via:

\begin{equation}\label{eq:product1}P_{\GG}(E) := P_{\GG}(E=1) := P_{\GG}(O(E)) := \prod_{p\in\PP} P_p(E_{\GG_{P}}^{-1}(\{1\})),\end{equation} 


where $P_p$ is the Cohen-Lenstra probability on $\GG_{P} \stackrel{\cong}{\longleftrightarrow} \GG_p$. Be aware that for each $p$ we have a different probability measure on $\GG_{P}=\GG_p$. If no confusion with the local Cohen-Lenstra probability measures is possible then we omit the index and write $P$ instead of $P_{\GG}$.
\end{definition}

\begin{remark}~

For any $r\geq 0$, the set of all groups of rank $r$ is measurable with respect to the above measure. This follows from the fact that the property of having rank $\leq r$ is a uniform property, and that
$$\{G\in\GG \mid \rk(G)=r\} = \{G\in\GG \mid \rk(G)\leq r\} \setminus \{G\in\GG \mid \rk(G)\leq r-1\}.$$
 Analogously, the uniform order and uniform exponent are measurable.

\end{remark}

The main result in this chapter is that $P_{\GG}$ is indeed a probability measure on $\GG$ that makes all uniform properties measurable. This justifies many calculations that researchers have carried out without specifying the probability space in which their calculations are supposed to happen. (Of course, the computations were usually carried out in terms of formal series, and the results are definitely true as identities of formal series. But in order to translate the results into probability statements, one needs to specify a probability space.) There are very few statements in the literature which are not uniform statements. There are only two wide-spread non-uniform examples I know of, both of them due to Cohen and Lenstra: 

Firstly, they state that the ``probability'' of a one-element set $\{G_0\}$ is $0$ for every $G_0\in\GG$ \cite[\S 9,II]{CL83}. However, it is obvious that this statement is not compatible with a probability measure, since that would mean that we have a countable probability space with probability $0$ for each atomic event, which is impossible. (Cohen and Lenstra were well aware of the fact that this gives only a content instead of a measure.) Secondly, they state that the ``probability'' that a finite abelian group has $p$-part $G_0$, for a fixed $p$-group $G_0$, is $P_p(G_0)$. This is highly problematic. As we have seen before, there is no probability measure on $\GG$ which is compatible with this statement, so we should at least avoid talking about probabilities in this context.

Now let us come to the main theorem:

\begin{theorem}\label{thm:globalmeasure}
The Cohen-Lenstra probability measure $P_{\GG}$ is indeed a probability measure, and it makes all uniform properties measurable.
\end{theorem}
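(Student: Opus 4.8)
The plan is to realize $P_{\GG}$ as the transfer, along the natural embedding $\iota:\GG\hookrightarrow X:=\prod_{p\in\PP}\GG_p$, $G\mapsto (G_p)_p$, of an honest infinite product measure, but read off only through the sub-$\sigma$-algebra generated by \emph{uniform} cylinders. Since each $P_p$ is a probability measure on the countable set $\GG_p$, the product measure $\mu:=\bigotimes_{p\in\PP}P_p$ exists on the full product $\sigma$-algebra of $X$ by the standard construction of infinite product measures (Kolmogorov's extension theorem). For a uniform property $E$ with base $U:=E_{\GG_P}^{-1}(\{1\})\subseteq\GG_P$, form the uniform cylinder $C_U:=\{x\in X\mid x_p\in U\text{ for all }p\}$, with the \emph{same} base $U$ in every coordinate via $\GG_p\cong\GG_P$. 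Then by definition of the product measure $\mu(C_U)=\prod_{p}P_p(U)$, which is exactly the right-hand side of the defining formula \eqref{eq:product1}. Let $\widetilde\Sigma$ be the $\sigma$-algebra on $X$ generated by all the $C_U$; on it $\mu|_{\widetilde\Sigma}$ is a genuine probability measure, and this is what I want to pull back.

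The transfer works as follows. One checks $\iota^{-1}(C_U)=O(E)$ (both are empty precisely when $0\notin U$), and since $\iota^{-1}$ commutes with all countable Boolean operations, $\iota^{-1}(\widetilde\Sigma)=\Sigma_{\GG}$. I would therefore \emph{define} $P_{\GG}(A):=\mu(\tilde A)$ for any $\tilde A\in\widetilde\Sigma$ with $\iota^{-1}(\tilde A)=A$. Granting that this is well-defined, $P_{\GG}$ inherits countable additivity and $P_{\GG}(\GG)=\mu(X)=1$ directly from $\mu$, it agrees with \eqref{eq:product1} on the generators $O(E)$, and it trivially makes every uniform property measurable (each $O(E)\in\Sigma_{\GG}$ by construction). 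Thus the entire theorem collapses to the single point of \textbf{well-definedness}: two representatives of the same $A$ differ by a set $\tilde A\triangle\tilde A'\in\widetilde\Sigma$ whose $\iota$-preimage is empty, i.e.\ a $\widetilde\Sigma$-set disjoint from the image $\iota(\GG)=\bigoplus_p\GG_p$.

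So everything rests on the following \emph{Key Lemma}, which I expect to be the main obstacle: \emph{every $\tilde A\in\widetilde\Sigma$ with $\tilde A\cap\iota(\GG)=\emptyset$ satisfies $\mu(\tilde A)=0$.} To attack it I would describe $\widetilde\Sigma$ through the support map. For $x\in X$ put $N_\lambda(x):=\#\{p\mid x_p=\lambda\}$ and $\operatorname{supp}(x):=\{\lambda\in\GG_P\mid N_\lambda(x)\geq 1\}$; since $\{N_\lambda=0\}=C_{\GG_P\setminus\{\lambda\}}\in\widetilde\Sigma$, one gets $\widetilde\Sigma=\sigma(\operatorname{supp})$. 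Writing $\tilde A=\operatorname{supp}^{-1}(B)$ and $\nu:=\operatorname{supp}_*\mu$, note that the supports realized by elements of $\iota(\GG)$ are exactly the \emph{finite} subsets of $\GG_P$ containing $0$; hence $\tilde A\cap\iota(\GG)=\emptyset$ forces $B$ to avoid all of them, giving $\mu(\tilde A)=\nu(B)\leq\nu(\{S\text{ infinite}\})+\nu(\{0\notin S\})$. It remains to show both terms vanish, i.e.\ that under $\mu$ the random support is almost surely finite and contains $0$.

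The second fact is immediate: $\nu(\{0\notin S\})=\mu(x_p\neq 0\ \forall p)=\prod_p(1-P_p(0))=0$, because $\sum_p(1-P_p(0))$ diverges (each term is of size $1/p$). The finiteness of the support is the crux and mirrors exactly why the \emph{non-uniform} order fails in Theorem \ref{thm:globalorderandexponent}. Here, however, uniformity saves us, and I would prove it by a first-moment estimate: the expected number of occurring nonzero partitions is $\sum_{\lambda\neq 0}\mu(N_\lambda\geq 1)\leq\sum_{\lambda\neq 0}\sum_p P_p(\lambda)=\sum_p(1-P_p(0))$, which indeed diverges, but \emph{solely} through the single partition $(1)$ (the groups $\ZZ/p$), since $\sum_pP_p((1))\sim\sum_p 1/p$. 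Removing just that one partition, a short expansion of $P_p(0)=\prod_{i\geq 1}(1-p^{-i})$ shows $1-P_p(0)-P_p((1))\ll p^{-2}$, so $\sum_{\lambda\neq 0,(1)}\sum_p P_p(\lambda)=\sum_p(1-P_p(0)-P_p((1)))<\infty$. Hence all partitions except possibly $(1)$ occur, in aggregate, only finitely often almost surely, so $\operatorname{supp}(x)$ is $\mu$-almost surely finite; Lemma \ref{lem:partgrowth} is what keeps the inner sums over partitions of each fixed order under control. This yields the Key Lemma, establishes well-definedness, and completes the proof.
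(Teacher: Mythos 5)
Your proposal is correct, and it takes a genuinely different route from the paper. The paper works intrinsically on $\GG$: it builds the family $\DDD$ of differences of finite unions of sets $O(E)$, extends $P$ to $\DDD$ by inclusion--exclusion, proves countable subadditivity on $\DDD$ by a well-founded induction over properties with finite fibers combined with the finite-exhaustion estimate \eqref{eq:finiteexhaustion} (which is where lemma \ref{lem:partgrowth} and the golden-ratio bound enter), and then invokes Carath\'eodory. You instead transport the honest product measure $\mu=\bigotimes_p P_p$ on $\prod_p\GG_p$ down to $\GG$ along $\iota$ after cutting it to the $\sigma$-algebra $\widetilde\Sigma$ generated by the uniform cylinders $C_U$; all the measure theory is outsourced to the existence of infinite product measures, and the entire difficulty is concentrated in your Key Lemma that a $\widetilde\Sigma$-set disjoint from $\bigoplus_p\GG_p$ is $\mu$-null. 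Your proof of that lemma is sound: $\widetilde\Sigma$ is the $\sigma$-algebra of the support map, the supports realized on $\iota(\GG)$ are exactly the finite subsets of $\GG_P$ containing $0$, and the events ``$0\notin\operatorname{supp}$'' and ``$\operatorname{supp}$ infinite'' are both null --- the first since $\prod_p(1-P_p(\{0\}))=0$, the second by the first-moment estimate $\sum_p\bigl(1-P_p(\{0\})-P_p(\{1\})\bigr)\leq\sum_p(p-1)^{-2}<\infty$, where $1$ denotes the unique partition of $1$. (For this you do not actually need lemma \ref{lem:partgrowth}: the identity $\sum_{\lambda}P_p(\lambda)=1$ together with $1-P_p(\{0\})\leq\frac{1}{p-1}$ already gives the bound.) Your a.s.-finiteness of the support is the exact probabilistic counterpart of the paper's equation \eqref{eq:finiteexhaustion}, and your isolation of the partitions $0$ and $1$ as the sole sources of divergence mirrors lemma \ref{lem:positiveprob}. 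What your route buys is brevity, conceptual transparency, and uniqueness of the extension for free (the sets $O(E)$ form a $\pi$-system); what the paper's route buys is an explicit outer measure $\nu$ on the full power set, a possibly larger Carath\'eodory $\sigma$-algebra, and quantitative approximation of $P(O(E))$ by finite-fiber properties. Two small points to tidy: countable additivity of $P_{\GG}$ requires disjointifying the chosen lifts $\tilde A_n$ upstairs before applying additivity of $\mu$ (routine, but it uses well-definedness and deserves a sentence), and your parenthetical should say that $\iota^{-1}(C_U)$ and $O(E)$ are both empty when $0\notin U$ --- the cylinder $C_U$ itself need not be.
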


The proof is complicated and the whole next section is devoted to it.

Before we come to the proof, let me first summarize our discussion about measurable functions: The theorem asserts that the rank, the uniform order, the uniform exponent and all other uniform properties are measurable, and so are all functions defined in these terms, for example, the expected value or higher moments of these functions.

\emph{Not} measurable are the classical order and exponent, and the property that the $p$-part of a group is isomorphic to some fixed $p$-group $G_0$. But for any single one of these properties we have shown (cf.\!\! theorem \ref{thm:globalorderandexponent} and page \pageref{plain:measurelocalstuff}, respectively) that there is no probability measure which would make these functions measurable, so we could not expect to be able to measure these functions. More generally, essentially no function is measurable that is defined via the $p$-part of the group, for some fixed $p$.

\section{The existence of a global measure}\label{sect:globalmeasure}

In this section, we prove theorem \ref{thm:globalmeasure}. We proceed as follows: First, we construct an outer measure on the power set of $\GG$ that coincides on certain key sets with our desired probability measure $P_{\GG}$. Then we use the theorem of Carath\'{e}odory to deduce the existence of a $\sigma$-algebra of measurable sets such that the outer measure is a measure on these sets. Finally we show that uniform properties are measurable with respect to this $\sigma$-algebra.

Let me start with some general remarks. First of all, note that the product that defines $P_{\GG}$ consists only of factors $\leq 1$. Therefore, we either have absolute convergence or we have definite divergence to $0$. In both cases, we may arbitrarily reorder the factors, and we may apply the formula

$$\prod_{i}a_i = \exp\left( \sum_i \log(a_i)\right).$$

Since this is a major tool for us, we will be concerned about estimating $\log(a_i)$. We will use the formula

$$-2h \leq \log(1-h) \leq -h,$$

which is true for any $0 \leq h \leq \frac{1}{2}$ (by Jensen's inequality) and in particular for $h=\frac{1}{p}$, for any prime $p$.
%

\subsection{First properties of the global measure}

This section contains essentially some technical lemmas about $P = P_{\GG}$. However, lemma \ref{lem:positiveprob} is of intrinsic interest, independent of its use in the construction of the probability space.\medskip

So let us check a couple of properties of $P$. First of all, in definition \ref{def:globalmeasure} we have not excluded the case that $E(0) = 0$, where on the left hand side $0$ stands for the trivial partition. But in this case $O(E)$ is empty, since any group has trivial $p$-parts for almost all $p\in\PP$. In other words, we have non-trivial ways to describe the empty set, so the formula in \ref{def:globalmeasure} had then better give $P(E) = P(\emptyset) = 0$, if it is supposed to make sense. Indeed this is the case:

\begin{lemma}
If $E$ is a uniform property with $E(0) = 0$, then $P(E) = 0$.
\end{lemma}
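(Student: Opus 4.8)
The plan is to show that the defining product for $P(E)$ equals zero by exhibiting infinitely many factors bounded away from $1$. Recall the formula
$$P(E) = \prod_{p\in\PP} P_p(E_{\GG_P}^{-1}(\{1\})),$$
where each factor is a probability, hence lies in $[0,1]$. The hypothesis $E(0)=0$ means that the trivial partition is \emph{not} in $E_{\GG_P}^{-1}(\{1\})$. The key observation is that the same function $E_{\GG_P}$ is used for every prime $p$, so the excluded partition $0$ is excluded uniformly across all primes.

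First I would bound each factor. For a fixed prime $p$, since $0 \notin E_{\GG_P}^{-1}(\{1\})$, the set $E_{\GG_P}^{-1}(\{1\})$ is contained in $\GG_p \setminus \{0\}$, and therefore
$$P_p(E_{\GG_P}^{-1}(\{1\})) \leq P_p(\GG_p \setminus \{0\}) = 1 - P_p(\{0\}).$$
Now I would use the explicit value of $P_p(\{0\})$: the trivial group has automorphism group of order $1$, so $P_p(\{0\}) = \prod_{i=1}^\infty(1-p^{-i})$. Using the estimate $\log(1-h)\geq -2h$ for $0\leq h\leq \tfrac12$ (recalled just before this lemma), one sees that $P_p(\{0\})$ is bounded below by a positive constant, say $P_p(\{0\}) \geq c > 0$ uniformly in $p$ (for instance $c = \prod_{i=1}^\infty(1-2^{-i}) > 0$, since the product over $p=2$ is the smallest). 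Consequently each factor satisfies
$$P_p(E_{\GG_P}^{-1}(\{1\})) \leq 1 - c < 1,$$
with the same constant $c$ for every prime $p$.

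Finally I would conclude that the infinite product over the infinitely many primes in $\PP$ is a product of countably many factors each at most $1-c<1$, hence
$$P(E) = \prod_{p\in\PP} P_p(E_{\GG_P}^{-1}(\{1\})) \leq \prod_{p\in\PP}(1-c) = 0,$$
and since $P(E)\geq 0$ this gives $P(E)=0$, as desired. The routine part is verifying the uniform lower bound $c>0$ on $P_p(\{0\})$; the conceptual point — which is really just the observation that $E(0)=0$ forces the \emph{same} nonzero mass $P_p(\{0\})$ to be discarded at \emph{every} prime — is what makes the product collapse to zero. I do not anticipate a serious obstacle here; the only care needed is to justify interchanging the infinite product with the elementary bound, which is legitimate because all factors lie in $[0,1]$ so the product either converges absolutely or diverges to $0$, exactly as noted in the preamble to this section.
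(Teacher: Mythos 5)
Your proof is correct and follows essentially the same route as the paper: both bound each factor by $P_p(E_{\GG_P}^{-1}(\{1\})) \leq 1 - P_p(\{0\})$ and then observe that the infinite product over all primes collapses to $0$. The only cosmetic difference is that the paper sharpens the bound to $1-P_p(\{0\}) \leq \tfrac{2}{p-1}$ (so the factors actually tend to $0$), whereas you use the uniform bound $1-c$ with $c=\prod_{i\ge 1}(1-2^{-i})>0$, which is weaker but equally sufficient since there are infinitely many primes.
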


\begin{proof}

We have $E_{\GG_{P}}^{-1}(\{1\}) \subseteq \GG_{P} \setminus \{0\}$, so we have 
\begin{eqnarray*}
P_p(E_{\GG_{P}}^{-1}(\{1\})) & \leq & P_p(\GG_p \setminus \{0\})\\
& = & 1-P_p(\{0\})\\
& = & 1- \prod_{i=1}^{\infty}(1-p^{-i})\\
& \leq & 1-\left(1-2\sum_{i=1}^{\infty}p^{-i}\right)\\
& = & 2\sum_{i=1}^{\infty}p^{-i}\\
& = & \frac{2}{p-1}.
\end{eqnarray*}

Therefore,

\begin{eqnarray*}
P(E=1) & = & \prod_{p\in\PP}P_p(E_{\GG_{P}}^{-1}(\{1\})) \\
& \leq & \prod_{p\in\PP}\frac{1}{p-1} \\
& = & 0.
\end{eqnarray*}

\end{proof}

So from now on we may assume that $E(0) = 1$.

We continue with a lemma, which is of interest in its own right:

\begin{lemma}
Let $E$ be a uniform property with $E(1) = 0$, where $1$ is the unique partition of $1$. Then $P(E) = 0$.
\end{lemma}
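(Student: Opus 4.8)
The plan is to mimic the proof of the preceding lemma, but with the cyclic group $\ZZ/p$ playing the role that the trivial group played there. Under the identification $\GG_P \cong \GG_p$, the partition $1$ corresponds to the unique group of order $p$, namely $\ZZ/p$, whose automorphism group has order $p-1$. The hypothesis $E(1)=0$ therefore says precisely that $\ZZ/p \notin E_{\GG_P}^{-1}(\{1\})$ for every prime $p$, so that $E_{\GG_P}^{-1}(\{1\}) \subseteq \GG_p \setminus \{\ZZ/p\}$. Taking $P_p$-measures gives
$$P_p(E_{\GG_P}^{-1}(\{1\})) \leq 1 - P_p(\ZZ/p) = 1 - \frac{1}{p-1}\prod_{i=1}^{\infty}(1-p^{-i}),$$
since $\#\Aut(\ZZ/p)=p-1$. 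Note that, unlike in the previous lemma, I do not need to invoke anything about $E(0)$; the estimate follows directly from $E(1)=0$.

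Next I would bound $P_p(\ZZ/p)$ from below. Using $\log(1-h) \geq -2h$ for $0\leq h \leq \tfrac12$ (in particular for $h=p^{-i}$), the infinite product satisfies
$$\prod_{i=1}^{\infty}(1-p^{-i}) \geq \exp\left(-2\sum_{i=1}^{\infty}p^{-i}\right) = \exp\left(-\tfrac{2}{p-1}\right) \geq e^{-2}$$
for all primes $p$. Hence $P_p(\ZZ/p) \geq \frac{e^{-2}}{p-1} > \frac{e^{-2}}{p}$, so the local probability of the excluded group decays only like $1/p$.

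It then remains to combine the factors. Since all factors are $\leq 1$, we may pass to logarithms as explained at the start of this section; applying $\log(1-h)\leq -h$ to $h=P_p(\ZZ/p)$ and summing over all primes yields
$$\log P(E) = \sum_{p\in\PP}\log P_p(E_{\GG_P}^{-1}(\{1\})) \leq -\sum_{p\in\PP}P_p(\ZZ/p) \leq -e^{-2}\sum_{p\in\PP}\frac{1}{p} = -\infty,$$
where the last step invokes the classical divergence of the sum of reciprocals of the primes (the same fact already used in the proof of theorem \ref{thm:globalorderandexponent}). Therefore $P(E)=0$.

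The only genuinely delicate point, and the place where this differs essentially from the previous lemma, is the decay rate. When $E(0)=0$ one excludes the trivial group, whose local probability $P_p(\{0\})=\prod_i(1-p^{-i})$ tends to $1$, so each factor $P_p(E)$ is bounded by a quantity of order $1/(p-1)$ tending to $0$ and the vanishing of the product is immediate. Here, by contrast, the excluded group $\ZZ/p$ has local probability only of order $1/p$, so the factors $P_p(E)$ satisfy merely $P_p(E) \leq 1 - e^{-2}/p$ and tend to $1$. The vanishing of the product is thus not automatic and has to be extracted from the divergence of $\sum_p 1/p$; establishing the uniform lower bound $P_p(\ZZ/p) \geq e^{-2}/p$ is the crux of the argument.
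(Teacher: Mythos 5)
Your proof is correct and takes essentially the same approach as the paper's: exclude the partition $1$ locally, bound $P_p(E) \leq 1 - P_p(\ZZ/p)$ with $P_p(\ZZ/p) \geq c\cdot p^{-1}$ for a uniform constant $c>0$, and conclude from the divergence of $\sum_{p\in\PP} p^{-1}$ after passing to logarithms. The only (cosmetic) difference is how the constant is obtained: the paper estimates the tail $\sum_{i\geq 2}p^{-i}$ to get $P_p(E)\leq 1-\tfrac12 p^{-1}$ for $p>2$, whereas you use the uniform lower bound $\prod_{i\geq 1}(1-p^{-i})\geq e^{-2}$.
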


\begin{proof}

We have $E_{\GG_{P}}^{-1}(\{1\}) \subseteq \GG_{P} \setminus \{1\}$, so we get

\begin{eqnarray*}
P_p(E_{\GG_{P}}^{-1}(\{1\})) & \leq & P_p(\GG_{P} \setminus \{1\})\\
& = & 1-P_p(\{1\})\\
& = & 1- \frac{1}{p-1}\prod_{i=1}^{\infty}(1-p^{-i})\\
& = & 1- p^{-1}\prod_{i=2}^{\infty}(1-p^{-i})\\
& \leq & 1-p^{-1}\left(1-2\sum_{i=2}^{\infty}p^{-i}\right)\\
& = & 1-p^{-1}+\frac{2p^{-2}}{p-1}\\
& \stackrel{\text{for } p>2}{\leq} & 1-\frac{1}{2}p^{-1}.
\end{eqnarray*}

Therefore,

\begin{eqnarray*}
P(E=1) & = & \prod_{p\in\PP}P_p(E_{\GG_{P}}^{-1}(\{1\})) \\
& \leq & \prod_{p\in\PP\setminus\{2\}}\left(1-\frac{1}{2}p^{-1}\right) \\
& = & \exp\left(\sum_{p\in\PP\setminus\{2\}}\log\left(1-\frac{1}{2}p^{-1}\right)\right)\\
& \leq & \exp\underbrace{\left(\sum_{p\in\PP\setminus\{2\}}\left(-\frac{1}{2}p^{-1}\right)\right)}_{= -\infty}\\
& = & 0.
\end{eqnarray*}

\end{proof}

In fact, we even have equivalence:

\begin{lemma}\label{lem:positiveprob}
Let $E$ be a uniform property. Then $P(E) > 0$ if and only if $E(0) = E(1) = 1$.
\end{lemma}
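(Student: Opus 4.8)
My plan is to handle the two implications separately: the forward direction comes for free from the two preceding lemmas, while the converse is where the real work lies.

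For ``$P(E) > 0 \Rightarrow E(0) = E(1) = 1$'' I would argue by contraposition. The first of the two preceding lemmas gives $E(0)=0 \Rightarrow P(E)=0$, and the second gives $E(1)=0 \Rightarrow P(E)=0$. Hence if $P(E)>0$ then necessarily $E(0)=E(1)=1$, with nothing further to prove.

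For the converse I assume $E(0)=E(1)=1$ and set $S := E_{\GG_{P}}^{-1}(\{1\})$, so that $\{0,1\}\subseteq S$. Since $P(E)=\prod_{p}P_p(S)$ is a product of factors in $(0,1]$, I would establish positivity by showing $\sum_{p}\bigl(1-P_p(S)\bigr)<\infty$: combined with $\log(1-h)\ge -2h$ for $0\le h\le \tfrac12$, this yields $P(E)\ge\exp\!\bigl(-2\sum_p(1-P_p(S))\bigr)>0$. To bound each summand, the inclusion $\{0,1\}\subseteq S$ lets me pass to the complement of the two-element set:
$$1-P_p(S)=P_p(\GG_{P}\setminus S)\le P_p(\GG_{P}\setminus\{0,1\})=1-P_p(\{0\})-P_p(\{1\}),$$
so everything reduces to estimating $1-P_p(\{0\})-P_p(\{1\})$.

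The hard part — and the one genuine subtlety — is that the naive bound fails. Keeping only $0\in S$ reproduces the estimate $1-P_p(S)\le 1-P_p(\{0\})\le 2/(p-1)$ already seen in the first lemma, and $\sum_p 1/(p-1)=\infty$; this is precisely why dropping the hypothesis $E(1)=1$ can force $P(E)=0$. The gain from also having $1\in S$ is a cancellation of the offending $p^{-1}$ term. Using $P_p(\{0\})=\prod_{i\ge1}(1-p^{-i})$ and $P_p(\{1\})=\tfrac{1}{p-1}\prod_{i\ge1}(1-p^{-i})$ from the earlier lemmas, I would record the clean identity
$$P_p(\{0\})+P_p(\{1\})=\Bigl(1+\tfrac{1}{p-1}\Bigr)\prod_{i=1}^{\infty}(1-p^{-i})=\prod_{i=2}^{\infty}(1-p^{-i}),$$
and then bound its complement by the Weierstrass inequality $\prod_{i\ge2}(1-p^{-i})\ge 1-\sum_{i\ge2}p^{-i}$:
$$1-P_p(S)\le 1-\prod_{i=2}^{\infty}(1-p^{-i})\le\sum_{i=2}^{\infty}p^{-i}=\frac{1}{p(p-1)}\le\frac{2}{p^2}.$$
Since $\sum_p 2/p^2\le 2\sum_{n\ge1}n^{-2}<\infty$ (and each summand is $\le\tfrac12$, so the logarithm estimate indeed applies), the product is positive and the lemma follows.
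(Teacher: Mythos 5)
Your proof is correct and follows essentially the same route as the paper: both directions hinge on the same key identity $P_p(\{0\})+P_p(\{1\})=\prod_{i=2}^{\infty}(1-p^{-i})$, i.e.\ on the observation that including the partition $1$ cancels the $p^{-1}$ term and upgrades the local defect from $O(1/p)$ to $O(1/p^{2})$. The only (cosmetic) difference is the final convergence step: the paper interchanges the two products and cites the known positive constant $\prod_{i\geq 2}\zeta(i)^{-1}\approx 0.4357$, whereas you conclude by the elementary estimate $\sum_p\bigl(1-P_p(S)\bigr)\leq\sum_p\frac{2}{p^{2}}<\infty$ together with $\log(1-h)\geq-2h$, which is self-contained but yields no explicit lower bound.
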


\begin{proof}

We have already shown one direction, so now assume that the latter statement is true. Then we have $E^{-1}(\{1\}) \supseteq \{0, 1\}$, so we get

\begin{eqnarray*}
P_p(E^{-1}(\{1\})) & \geq & P_p(\{0,1\})\\
& = & \left(\sum_{i=0}^{\infty}p^{-i}\right)\prod_{i=1}^{\infty}(1-p^{-i})\\
& = & \frac{1}{1-p^{-1}}\prod_{i=1}^{\infty}(1-p^{-i})\\
& = & \prod_{i=2}^{\infty}(1-p^{-i}).
\end{eqnarray*}

Therefore,

\begin{eqnarray*}
P(E) & = & \prod_{p\in\PP}P_p(E^{-1}(\{1\})) \\
& \geq & \prod_{p\in\PP} \prod_{i=2}^{\infty}(1-p^{-i})\\
& = &  \prod_{i=2}^{\infty}\prod_{p\in\PP}(1-p^{-i})\\
& = & \prod_{i=2}^{\infty}\zeta^{-1}(i),
\end{eqnarray*}

where $\zeta$ denotes the Riemann $\zeta$-function.

The latter product is well-known and converges against a positive constant $\approx 0.435757...$ (see e.g. \cite[\S 7]{CL83}).

\end{proof}

\subsection{The global outer measure}

In order to define an outer measure, we first need to specify a family $\DDD$ of subsets with non-negative values (``Method I'' in \cite{Mun53}).

\begin{definition}\label{def:OandD}
Let $E_1,\ldots,E_r$ be uniform properties. In accordance with the former definition of $O(E)$ we define 

$$O(E_1,\ldots,E_r) := \bigcup_{i=1}^r E_i^{-1}(1),$$

and we set 
$$\OOO := \{O(E_1,\ldots,E_r) \mid r \geq 0, E_1,\ldots, E_r \text{ uniform properties}\}.$$

Let $E_1,\ldots,E_r,F_1,\ldots,F_s$ be uniform properties. Then we define 

$$D(E_1,\ldots,E_r;F_1,\ldots,F_s) := O(E_1,\ldots,E_r) \setminus O(F_1,\ldots,F_s),$$

and we set 
\begin{eqnarray*}\DDD & := & \{ D(E_1,\ldots,E_r;F_1,\ldots,F_s) \mid r,s \geq 0,\\&& E_1,\ldots, E_r,F_1,\ldots,F_s  \text{ uniform properties}\}.\end{eqnarray*}

By slight abuse of notation, I will sometimes write $O(\EE)$ and $D(\EE;\FFF)$ instead of $O(E_1,\ldots,E_r)$ and $D(E_1,\ldots,E_r;F_1,\ldots,F_s)$, respectively, where $\EE$ and $\FFF$ are the families $\{E_1,\ldots,E_r\}$ and $\{F_1,\ldots,F_s\}$.

By even stronger abuse of notation, I will occasionally write $O(E_i)$ and $D(E_i;F_j)$ in these cases.
\end{definition}

\begin{remark}~\vspace{-.5ex}

\begin{itemize}
\item $\OOO$ is embedded into $\DDD$ by setting $s:=0$.
\item For all uniform properties $E_1,\ldots, E_r$ and $E_1',\ldots,E_s'$:
 \begin{eqnarray*}O(E_1,\ldots, E_r) \cap O(E_1',\ldots,E_s') & = & O(E_1 \wedge E_1', E_1 \wedge E_2', \ldots, E_r \wedge E_s').\\
 O(E_1,\ldots,E_r) \cup O(E_1',\ldots,E_s') & = & O(E_1,\ldots,E_r,E_1',\ldots,E_s').\end{eqnarray*}
\item For all uniform properties $E_1,\ldots, E_r,F_1,\ldots,F_s$ and $E_1',\ldots,E_t'$:
 \begin{align*}
 \begin{split}D(E_1,\ldots, E_r;F_1,\ldots,F_s) \cap O(E_1',\ldots,E_t') & \\ = D(E_1 \wedge E_1', \ldots &, E_r \wedge E_t';F_1,\ldots,F_s).\end{split}\end{align*}
\emph{Caution:} No similar formula for the union exists.
\item $\DDD$ is closed under intersection. More precisely, we have
\begin{eqnarray*} && D(E_1,\ldots,E_{r_1};F_1,\ldots,F_{s_1}) \cap D(\tilde{E}_1,\ldots,\tilde{E}_{r_2};\tilde{F}_1,\ldots,\tilde{F}_{s_2}) \\
&&= D(E_1 \wedge \tilde{E}_1, E_1 \wedge \tilde{E}_2,\ldots,E_{r_1}\wedge \tilde{E}_{r_2};F_1,\ldots,F_{s_1},\tilde{F}_1,\ldots,\tilde{F}_{s_2}) \end{eqnarray*}
\item $\DDD$ is \emph{not} closed under union!
\item $\DDD$ is \emph{not} closed under set difference!\pagebreak[2]
\end{itemize}
 
\end{remark}

We would like to exend the definition of the function $P$ from single uniform properties to the whole set $\DDD$. In order to do so, we need one more remark:

\begin{remark}\label{rem:normalformofD}~

\begin{itemize}
\item We may always assume that the defining uniform properties $E_1,\ldots,E_r,$ $F_1,\ldots,F_s$ of a set $D(E_i,F_j) \in \DDD$ satisfy the condition $$O(F_1,\ldots,F_s) \subseteq O(E_1,\ldots,E_r).$$

In fact, if $E_i$ and $F_j$ are uniform properties which do \emph{not} satisfy this condition, then we may use the identity 
$$D(E_1,\ldots,E_r;F_1,\ldots,F_s) = D(E_1,\ldots,E_r;F_1\wedge E_1, F_1\wedge E_2,\ldots,F_s\wedge E_r)$$

to enforce the condition.
\item It is easy to see that $O(F_1,\ldots,F_s) \subseteq O(E_1,\ldots,E_r)$ if and only if for any $i \in \{1,\ldots,s\}$ there is a $j \in \{1,\ldots,r\}$ such that $O(F_i) \subseteq O(E_j)$. 
\end{itemize}
\end{remark}

\begin{defiprop}\label{defiprop:Pwelldefined}
We extend $P$ to $\DDD$ as follows: We have already defined $P(O(E))$ for a single uniform property $E$ in definition \ref{def:globalmeasure}. Because of the formula $O(E_1) \cap O(E_2) = O(E_1 \wedge E_2)$ the function $P$ is also defined on intersections of sets in $\OOO$. Hence we may extend $P$ to sets of the form $O(E_1,E_2)$ ($=O(E_1) \cup O(E_2)$) via  
$$P(O(E_1,E_2)) := P(O(E_1)) + P(O(E_1)) - P(O(E_1\wedge E_2)).$$

Continuing inductively, we extend $P$ on the set $\OOO$. Finally, for uniform properties $E_1,\ldots,E_r,F_1,\ldots,F_s$ with $O(F_1,\ldots,F_s) \subseteq O(E_1,\ldots,E_r)$ we set 
$$P(D(E_1,\ldots,E_r;F_1,\ldots,F_s)) := P(O(E_1,\ldots,E_r)) - P(O(F_1,\ldots,F_s)).$$

This yields a well-defined map $P:\DDD\rightarrow [0,1]$.
\end{defiprop}

\begin{proof}
The procedure for computing $P(O(E_1,\ldots,E_r))$ yields the Inclusion-Exclusion Formula, which is independent of the order of the $E_i$. So we only need to show that whenever 
\begin{equation}\label{eq:Pwelldefined}D(E_1,\ldots,E_{r_1};F_1,\ldots,F_{s_1}) = D(E_1',\ldots,E_{r_2}';F_1',\ldots,F_{s_2}')\end{equation}
 then the value of $P$ coincides for both sets. 

Let us first consider the case that $O(E_1,\ldots,E_{r_1}) = O(E_1',\ldots,E_{r_2}')$. If for some $i,j$ we have $O(E_i) \subseteq O(E_j)$, then the result of the Inclusion-Exclusion Formula does not change if we omit $E_i$. So we may assume that all $E_i$ are maximal in the sense that $O(E_i)$ is not a proper subset of $O(E_j)$, for all $j \neq i$. We assume the same for the $E_i'$. Then I claim that $E_1$ occurs also on the right hand side. By symmetry, this will imply the statement for $\OOO$.

Because of the maximality of $E_1$, it suffices to show that $O(E_1) \subseteq O(E_i')$ for some $i$. (Then by symmetry, $O(E_i') \subseteq O(E_j)$ for some $j$, and by maximality of $E_1$ we conclude $j=1$ and $O(E_1)=O(E_i')$). Assume not. Then for all $1\leq i \leq r_2$ there is a partition $\underline{n}_i$ such that $E_1(\underline{n}_i) = 1$ and $E_i'(\underline{n}_i) = 0$. Now take $r_2$ distinct primes $p_1,\ldots,p_{r_2}$ and consider a group with $p_i$-part equal to $\underline{n}_i$, for $i=1,\ldots,r_2$. Then this group is contained in $O(E_1)$ but in none of the $O(E_i')$, contradicting $O(E_1,\ldots,E_{r_1}) = O(E_1',\ldots,E_{r_2}')$.

This finishes our proof for $\OOO$. For $\DDD$, first notice that by the preceding remark, $P$ is indeed defined on the whole set $\DDD$. To show that it is well-defined we use essentially the same argument as for $\OOO$. But beforehand, we replace each property $F_i$ by properties $F_{i,1} := F_i \cap E_1,\ldots, F_{i,r_1} := F_i \cap E_{r_1}$. Since this does not change $O(F_{\dots})$, it does not affect $P$. Now we may further assume that no $E_i$ equals an $F_j$. Otherwise, we replace the tuple 
$$(E_1,\ldots,E_{r_1};F_{1,1},\ldots,F_{s_1,r_1})$$
by 
$$(E_1,\ldots,\widehat{E_i},\ldots,E_{r_1};F_{1,1},\ldots,\widehat{F_{i,1}},\widehat{\ldots},\widehat{F_{i,r_1}},\ldots,F_{s_1,r_1}),$$
where a hat indicates that the entry is removed. (The change of the $F$ is necessary to ensure that each $O(F)$ is still contained in some $O(E)$). You can easily check that this procedure does not change the value of $P$.

Furthermore, we may assume that all $E_i$, $E_i'$ are maximal and all $F_i$, $F_i'$ are maximal (in the sets $\{F_j\}, \{F_j'\}$, respectively). If not, then remove the superfluous sets.

Now we proceed as in the proof for $\OOO$. First we show that the $E_i$ and the $E_i'$ coincide. Assume $E_1$ does not appear in the right hand side. Choose mutually distinct primes $p_i, p_{i,j}$ for each $E_i'$ and each $F_{i,j}$, respectively. Then construct a group such that its $p_i$-part corresponds to a partition in $E_1^{-1}(1) \setminus E_i'^{-1}(1)$ and its $p_{i,j}$-part corresponds to a partition in $E_1^{-1}(1) \setminus F_{i,j}^{-1}(1)$. The assumptions above ensure that the latter sets are all non-empty. Then the group is in $E_1$, but it is neither in any $E_i'$ nor in any $F_{i,j}$. Therefore, it is contained in the left hand side, but not in the right hand side of \eqref{eq:Pwelldefined}. Contradiction! So the assumption was wrong, and the $E_i$ and the $E_j'$ coincide.

Now turn to the $F_{i,j}$ and $F_{i,j}'$. Since $O(E_i) = O(E_i')$, $O(F_{i,j})\subseteq O(E_i)$, $O(F_{i,j}')\subseteq O(E_i')$, and $O(E_i)\setminus O(F_{i,j}) = O(E_i')\setminus O(F_{i,j}')$, we can deduce $O(F_{i,j}) = O(F_{i,j}')$. Now we may apply the first part of the proof (for $\OOO$) to conclude that $P(O(F_{i,j})) = P(O(F_{i,j}'))$. Putting things together, we see that $$P(D(E_1,\ldots,E_{r_1};F_1,\ldots,F_{s_1})) = P(D(E_1',\ldots,E_{r_2}';F_1',\ldots,F_{s_2}')),$$
as required.
\end{proof}

\begin{remark}~

In the following proofs (as well as in the proof above), be aware that the formula 
$$P(D(E_1,\ldots,E_r;F_1,\ldots,F_s)) = P(O(E_1,\ldots,E_r))- P(O(F_1,\ldots,F_s))$$
is \emph{not true} if we omit the condition $$O(F_1,\ldots,F_s) \subseteq O(E_1,\ldots,E_r).$$

\end{remark}\medskip

We will use the function $P$ to define an outer measure. But before that, we prove a technical lemma about $P$:

\begin{lemma}\label{lem:disjointunion}~

\begin{enumerate}
\item Let $D_1,\ldots,D_n \in \DDD$ be mutually disjoint, and let $D_0\in \DDD$ be such that 
$$\bigcup_{i=1}^n D_i \subseteq D_0.$$

Then 

$$\sum_{i=1}^{n} P(D_i) \leq P(D_0).$$

In particular, this implies that $P$ is monotone, i.e., for $D_1 \subseteq D_0$ we have $P(D_1) \leq P(D_0)$.
\item Let $D_0, D_1,\ldots,D_n \in \DDD$ be such that
$$D_0 \subseteq \bigcup_{i=1}^n D_i.$$

Then 

$$P(D_0) \leq \sum_{i=1}^{n} P(D_i).$$

\end{enumerate}
\end{lemma}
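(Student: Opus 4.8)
The plan is to realize the finitely additive set function $P$ on $\DDD$ as the restriction, through a suitable ``dictionary'' map, of an honest \emph{countably additive} probability measure living on a genuine product space, and then to read off both inequalities from the additivity, subadditivity and monotonicity of that measure. The one fact I will really exploit is that, although no honest measure exists on $\GG$ itself, a $\DDD$-set that happens to be \emph{empty} in $\GG$ corresponds to a set of measure $0$ in the product space; this single observation is the engine behind everything.

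Concretely, since $D_0,\dots,D_n$ involve only finitely many uniform properties $E^{(1)},\dots,E^{(m)}$, I first fix these, writing $A^{(j)}:=(E^{(j)})^{-1}(1)\subseteq\GG_{P}$ and fixing representations $D_i=D(\EE_i;\FFF_i)$ in normal form (so that $O(\FFF_i)\subseteq O(\EE_i)$, using remark \ref{rem:normalformofD}). I then form the product probability space $(\Omega,\mu)$ with $\Omega:=\prod_{p\in\PP}\GG_{P}$ and $\mu:=\bigotimes_{p\in\PP}P_p$, which is a bona fide probability measure because each factor $(\GG_{P},P_p)$ is a countable probability space. For a uniform property $E$ with partition-set $A^{E}$ put $U_{E}:=\{\omega\in\Omega \mid \omega_p\in A^{E}\text{ for all }p\}$ and $U(\EE):=\bigcup_{E\in\EE}U_E$. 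Being a countable intersection of independent coordinate events, $U_E$ is measurable with $\mu(U_E)=\prod_{p}P_p(A^{E})=P(O(E))$ by definition \ref{def:globalmeasure}. I now define the dictionary on representations by $\Phi(O(E_1,\dots,E_r)):=\bigcup_k U_{E_k}$ and $\Phi(D(\EE;\FFF)):=U(\EE)\setminus U(\FFF)$.

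The heart of the argument is the claim that $\mu(\Phi(S))=P(S)$ for every $S\in\DDD$. For $S\in\OOO$ this holds because the inclusion--exclusion procedure defining $P(O(\EE))$ in \ref{defiprop:Pwelldefined} computes exactly $\mu(\bigcup_k U_{E_k})$ (using $U_{E\wedge E'}=U_E\cap U_{E'}$), and for $S=D(\EE;\FFF)$ in normal form one has $U(\FFF)\subseteq U(\EE)$, whence $\mu(\Phi(S))=\mu(U(\EE))-\mu(U(\FFF))=P(S)$; representation-independence is precisely \ref{defiprop:Pwelldefined}. Moreover $\Phi$ intertwines finite intersections and the operation ``remove an $O$-set'' with the corresponding operations in $\Omega$ -- exactly the operations under which $\DDD$ is closed. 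Combining these: if $S\in\DDD$ equals $\emptyset$ as a subset of $\GG$, then $\mu(\Phi(S))=P(\emptyset)=0$, \emph{even though $\Phi(S)$ may be a nonempty $\mu$-null subset of $\Omega$}. From this I get the monotonicity statement $\mu(\Phi(S)\setminus\Phi(T))=0$ whenever $S\subseteq T$ in $\DDD$, by writing, for $T=D(\EE_T;\FFF_T)$, $\Phi(S)\setminus\Phi(T)=\big(\Phi(S)\setminus U(\EE_T)\big)\cup\big(\Phi(S)\cap U(\FFF_T)\big)$ and observing that each piece is $\Phi$ of the empty $\DDD$-set $S\setminus O(\EE_T)$, resp.\ $S\cap O(\FFF_T)$.

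Both parts then follow by pure measure theory in $\Omega$, with $B_i:=\Phi(D_i)$. For part (1), disjointness gives $\mu(B_i\cap B_j)=P(D_i\cap D_j)=0$ for $i\neq j$ (and likewise for higher intersections), so inclusion--exclusion collapses to $\sum_i\mu(B_i)=\mu(\bigcup_i B_i)$; monotonicity applied to $D_i\subseteq D_0$ gives $\mu(B_i\setminus B_0)=0$, hence $\mu(\bigcup_i B_i)\leq\mu(B_0)$, and converting $\mu$-values back to $P$-values yields $\sum_i P(D_i)\leq P(D_0)$ (the case $n=1$ being the asserted monotonicity). For part (2), I expand $\Phi(D_0)\setminus\bigcup_i B_i$ using $B_i^{c}=U(\EE_i)^c\cup U(\FFF_i)$ into finitely many pieces indexed by $J\subseteq\{1,\dots,n\}$; each piece equals $\Phi(T_J)$ for the $\DDD$-set $T_J=\big(D_0\cap\bigcap_{i\in J}O(\FFF_i)\big)\setminus\bigcup_{i\notin J}O(\EE_i)$, and the covering hypothesis $D_0\subseteq\bigcup_i D_i$ forces every $T_J$ to be empty. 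Thus $\mu(\Phi(D_0)\setminus\bigcup_i B_i)=0$, and subadditivity gives $P(D_0)=\mu(B_0)\leq\mu(\bigcup_i B_i)\leq\sum_i\mu(B_i)=\sum_i P(D_i)$. The main obstacle -- the point demanding the most care -- is precisely the non-faithfulness of $\Phi$: the product measure $\mu$ is in fact concentrated \emph{off} the image of $\GG$ in $\Omega$ (by the second Borel--Cantelli lemma, since $\sum_p P_p(\GG_p\setminus\{0\})=\infty$), so relations holding in $\GG$ do \emph{not} transfer to literal relations in $\Omega$, only modulo $\mu$-null sets. The whole scheme survives only because the quantity we track is $\mu$, and ``empty in $\GG$'' already forces ``$\mu$-measure $0$ in $\Omega$'' via well-definedness of $P$; a secondary nuisance is the bookkeeping needed to keep every intermediate set inside $\DDD$, using only closure under finite intersection and under removing an $O$-set, never under general unions or differences.
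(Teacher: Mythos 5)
Your argument is correct, but it takes a genuinely different route from the paper's. The paper never leaves the finite world: it expands both sides by inclusion--exclusion, intersects everything with the finite truncations $\GG_{\leq x}$ (honest finite products of probability spaces that embed into $\GG$, so the disjointness and containment hypotheses transfer literally), applies finite additivity there, and lets $x\to\infty$; the delicate step is reformulating the hypotheses as inclusions among $O$-sets (conditions 1--3) and arguing that sufficiently many primes witness them at finite level. You instead pass to the full infinite product $\Omega=\prod_{p}\GG_{P}$ with its countably additive product measure $\mu$, where the image of $\GG$ is $\mu$-null and set relations in $\GG$ do \emph{not} transfer literally; your bridge is the observation that a $\DDD$-set which is empty in $\GG$ has $P$-value $0$ in every representation --- which is exactly the content of \ref{defiprop:Pwelldefined} --- and hence corresponds to a $\mu$-null set in $\Omega$. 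After that, both parts are pure measure theory. Your route buys a cleaner reduction: the combinatorial work collapses to checking that explicit $\DDD$-sets ($D_i\cap D_j$, $D_i\setminus D_0$, your $T_J$) are empty in $\GG$, which is immediate from the hypotheses, and it makes transparent that the whole lemma rests on the well-definedness of $P$. The paper's route buys the avoidance of the infinite product measure and of all null-set bookkeeping, at the price of a limiting argument. Two points you should make explicit: the identity $\mu\bigl(U(\EE)\setminus U(\FFF)\bigr)=P(D(\EE;\FFF))$ must be verified for \emph{arbitrary} representations, not only normal forms (it holds, since $U(\EE)\cap U(\FFF)=U(\EE\wedge\FFF)$ reduces both sides simultaneously to the normal form $D(\EE;\FFF\wedge\EE)$); and the passage from $O(\FFF)\subseteq O(\EE)$ in $\GG$ to $U(\FFF)\subseteq U(\EE)$ in $\Omega$ is not automatic for arbitrary properties (e.g.\ when some $F(0)=0$), but is harmless because you enforce normal form via $F\mapsto F\wedge E$ at the level of fibers in $\GG_{P}$.
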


\begin{proof}
We only prove the first statement, which is slightly more complicated. The proof of the second case is completely analogous, except that we do not have to worry about the $D_i$ being disjoint.\medskip

Let $D_i = D(\EE_i,\FFF_i)$ for all $i=0,\ldots,n$, where $\EE_i,\FFF_i$ are collections of uniform properties. We may assume $O(\FFF_i)\subseteq O(\EE_i)$ for all $i$. Then $P(D_i) = P(O(\EE_i)) - P(O(\FFF_i))$ for all $i=0,\ldots,n$.

Therefore, we need to show that 

\begin{equation}
\sum_{i=1}^{n} P(O(\EE_i)) - \sum_{i=1}^n P(O(\FFF_i)) \leq  P(O(\EE_0)) - P(O(\FFF_0)),
\end{equation}
or equivalently by expanding the $P(O(\EE_i))$:

\begin{align}\label{eq:OOO-equation}
 \begin{split}
\sum_{i=1}^{n} \sum_{S\subseteq \EE_i} (-1)^{\#S} P\big(\bigwedge_{E\in S} E\big) - & \sum_{i=1}^n \sum_{S\subseteq \FFF_i} (-1)^{\#S} P\big(\bigwedge_{F\in S} F\big) \\
\leq \sum_{S\subseteq \EE_0} (-1)^{\#S} & P\big(\bigwedge_{E\in S} E\big) - \sum_{S\subseteq \FFF_0} (-1)^{\#S} P\big(\bigwedge_{F\in S} F\big).
\end{split} &&
\end{align}

Let us first examine the prerequisites of the statement. We may assume that no $E \in \EE_i$ is contained in any $F \in \FFF_i$, for $i=0,\ldots,n$. Then it is easy to see that the prerequisites are satisfied if and only if the following conditions are satisfied:

\begin{enumerate}
\item[1.] $O(\EE_i) \subseteq O(\EE_0)$ for $i= 1,\ldots,n$.
\item[2.] $O(\EE_i) \cap O(\EE_j) \subseteq O(\FFF_i) \cup O(\FFF_j)$ for all $1\leq i < j \leq n$.
\item[3.] $O(\FFF_0) \subseteq O(\FFF_i)$ for $i=1,\ldots,n$.
\end{enumerate}

Now let $\PP_{\leq x} := \{p\in\PP \mid p \leq x\}$ and let 
$$\GG_{\leq x} := \prod_{p\in \PP_{\leq x}}\GG_p.$$ 

Then $\GG_{\leq x}$ is the direct product of probability spaces and carries a unique product probability measure. The set $\GG_{\leq x}$ embeds naturally into $\GG$. So for each uniform property $E$, we may define $O_{\leq x}(E) := O(E) \cap \GG_{\leq x}$. By definition of the product probability, we have for these sets the probabilities
 
$$P_{\leq x}(E) := P_{\GG_{\leq x}}(O_{\leq x}(E)) = \prod_{p\in\PP_{\leq x}} P_p(E).$$

Then it is evident that for any uniform property $E$,

$$P(E) = \lim_{x\to\infty} P_{\leq x}(E).$$

Conditions 1.--3. are still satisfied if we intersect both sides with $\GG_{\leq x}$, so we also have

\begin{enumerate}
\item[1'.] $O_{\leq x}(\EE_i) \subseteq O_{\leq x}(\EE_0)$ for $i= 1,\ldots,n$.
\item[2'.] $O_{\leq x}(\EE_i) \cap O_{\leq x}(\EE_j) \subseteq O_{\leq x}(\FFF_i) \cup O_{\leq x}(\FFF_j)$ for all $1\leq i < j \leq n$.
\item[3'.] $O_{\leq x}(\FFF_0) \subseteq O_{\leq x}(\FFF_i)$ for $i=1,\ldots,n$.
\end{enumerate}
 
Now for sufficiently large $x$ (we need more primes than uniform properties involved), the conditions 1.'--3.' are equivalent to the statement 

\begin{eqnarray*}& & D_1\cap\GG_{\leq x},\ldots,D_n\cap\GG_{\leq x} \text{ are mutually disjoint, and }\\
& & \bigcup_{i=1}^n D_i \cap\GG_{\leq x} \subseteq D_0 \cap\GG_{\leq x}. \end{eqnarray*}

Since $\GG_{\leq x}$ is a probability space, we deduce 

$$\sum_{i=1}^{n} P(D_i \cap\GG_{\leq x}) \leq P(D_0 \cap\GG_{\leq x}),$$

or equivalently 

\begin{align*}
\sum_{i=1}^{n} \sum_{S\subseteq \EE_i} (-1)^{\#S} P_{\leq x}\big(\bigwedge_{E\in S} E\big) - \sum_{i=1}^n \sum_{S\subseteq \FFF_i} (-1)^{\#S} P_{\leq x}\big(\bigwedge_{F\in S} F\big) & \\ \leq  \sum_{S\subseteq \EE_0} (-1)^{\#S} P_{\leq x}\big(\bigwedge_{E\in S} E\big) - \sum_{S\subseteq \FFF_0} (-1)^{\#S} & P_{\leq x}\big(\bigwedge_{F\in S} F\big).
\end{align*}

Since we have finite sums and differences on both sides, we obtain equation \eqref{eq:OOO-equation} by taking the limit $x\to\infty$.
This proves the claim.

\end{proof}

Now we come to the definition of the outer measure:

\begin{definition}
For any $A \subset \GG$, we define the \emph{outer measure $\nu$} as 
$$\nu(A) := \inf\left\{\sum_{i=1}^{\infty}P(A_i)\left| A_i \in \DDD \text{ and } A \subset \bigcup_{i=1}^{\infty} A_i\right.\right\}.$$
\end{definition}

\begin{remark}
The definition above always yields an outer measure, for any map $P:S\rightarrow [0,\infty]$, where $S$ is any subset of the power set of $\GG$ containing $\emptyset$ and $P(\emptyset) = 0$ \cite{Mun53}.

Recall that an outer measure is almost a measure, only we replace the $\Sigma$-additivity by $\Sigma$-subadditivity. More precisely, an outer measure on a space $X$ is a function $\nu$ from the power set of $X$ into the interval $[0,\infty]$ satisfying the three conditions:
\begin{itemize}
\item $\nu(\emptyset) = 0$.
\item \emph{Monotonicity}: $\nu(A) \leq \nu(B)$ for all $A \subseteq B \subseteq X$.
\item \emph{$\Sigma$-subadditivity}: $$\nu\big(\bigcup_{i=1}^{\infty} A_i\big) \leq \sum_{i=1}^{\infty}\nu(A_i)$$
for all $A_i\subseteq X$.
\end{itemize}

\end{remark}

\subsection{The global measure}

Next we check that $\nu$ and $P$ coincide on $\DDD$. We divide up the proof into several steps. First, we prove a helpful lemma:

\begin{lemma}\label{lem:onDDD}
Let $D = D(E_1,\ldots,E_r;F_1,\ldots,F_s) \in \DDD$ such that for all $1\leq i\leq r$ we have finite fibers $E_i^{-1}(1)$, and assume without loss of generality that $O(E_i) \not\subseteq O(F_{i'})$ for all $i,i'$. Let $\tilde{D}_j = D(\tilde{E}_{j,k};\tilde{F}_{j,k'})$ be an arbitrary family in $\DDD$ such that 

$$D \subseteq \bigcup_{j}\tilde{D}_j.$$

Then for each $i$ there exists a $j$ such that $O(E_i) \subseteq O(\tilde{E}_{j,1}, \tilde{E}_{j,2},\ldots)$ and such that $O(E_i) \not\subseteq O(\tilde{F}_{j,1},\tilde{F}_{j,2},\ldots)$.
\end{lemma}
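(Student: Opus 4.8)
The plan is to exploit the finiteness of the fibres $E_i^{-1}(1)$ to manufacture, for each fixed $i$, a single \emph{test group} $G^\ast\in D$ whose collection of $p$-parts realizes \emph{all} of the partitions in $E_i^{-1}(1)$ at once; once such a $G^\ast$ is available, the inclusion $D\subseteq\bigcup_j\tilde D_j$ pins down the index $j$ and both claimed inclusions fall out immediately. Before that, I would record the elementary dictionary between inclusions of the sets $O(\cdot)\subseteq\GG$ and inclusions of partition fibres: for uniform properties $E,E'$ one has $O(E)\subseteq O(E')$ as soon as $E^{-1}(1)\subseteq E'^{-1}(1)$ in $\GG_P$ (and, using $E(0)=1$, the converse holds too, a partition in $E^{-1}(1)\setminus E'^{-1}(1)$ placed as one $p$-part of an otherwise trivial group witnessing the failure). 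In particular the standing hypothesis $O(E_i)\not\subseteq O(F_{i'})$ supplies, for every $i'$, a partition $\mu_{i'}\in E_i^{-1}(1)\setminus F_{i'}^{-1}(1)$.

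Next I would build $G^\ast$. Writing the finite fibre as $E_i^{-1}(1)=\{0=\lambda_0,\lambda_1,\ldots,\lambda_m\}$ (it contains $0$ since $E_i(0)=1$), I pick $m$ distinct primes $q_1,\ldots,q_m$ and let $G^\ast$ be the group with $q_t$-part $\lambda_t$ for $t=1,\ldots,m$ and trivial $p$-part otherwise. Then the set of $p$-parts of $G^\ast$ is exactly $E_i^{-1}(1)$, so $G^\ast\in O(E_i)\subseteq O(E_1,\ldots,E_r)$. Moreover, for each $i'$ the partition $\mu_{i'}$ occurs among these $p$-parts and lies outside $F_{i'}^{-1}(1)$, so $G^\ast\notin O(F_{i'})$; hence $G^\ast\notin O(F_1,\ldots,F_s)$ and therefore $G^\ast\in D$.

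Finally I would feed $G^\ast$ into the cover. Since $G^\ast\in D\subseteq\bigcup_j\tilde D_j$, there is an index $j$ with $G^\ast\in\tilde D_j=O(\tilde E_{j,1},\tilde E_{j,2},\ldots)\setminus O(\tilde F_{j,1},\tilde F_{j,2},\ldots)$. From $G^\ast\in O(\tilde E_{j,1},\tilde E_{j,2},\ldots)=\bigcup_k O(\tilde E_{j,k})$ I obtain a \emph{single} index $k$ with $G^\ast\in O(\tilde E_{j,k})$; since the $p$-parts of $G^\ast$ exhaust $E_i^{-1}(1)$, this forces $E_i^{-1}(1)\subseteq\tilde E_{j,k}^{-1}(1)$, whence $O(E_i)\subseteq O(\tilde E_{j,k})\subseteq O(\tilde E_{j,1},\tilde E_{j,2},\ldots)$, the first required inclusion. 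The second is then free of charge: $G^\ast\in O(E_i)$ while $G^\ast\notin O(\tilde F_{j,1},\tilde F_{j,2},\ldots)$, so $O(E_i)\not\subseteq O(\tilde F_{j,1},\tilde F_{j,2},\ldots)$.

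The single genuinely non-formal step, and the only place the finiteness hypothesis is used, is the construction of $G^\ast$: a finite abelian group has but finitely many nontrivial $p$-parts, so realizing every partition of $E_i^{-1}(1)$ simultaneously is possible precisely because this fibre is finite. The one remaining point that demands care is the union semantics of $O(\tilde E_{j,1},\tilde E_{j,2},\ldots)$, which is a union of the sets $O(\tilde E_{j,k})$ rather than $O$ applied to a disjunction; it is exactly this that collapses the membership of $G^\ast$ onto one index $k$ and thereby yields the fibre inclusion $E_i^{-1}(1)\subseteq\tilde E_{j,k}^{-1}(1)$.
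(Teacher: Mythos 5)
Your proof is correct and uses exactly the paper's key device: the test group realizing every partition of the finite fibre $E_i^{-1}(1)$ at pairwise distinct primes, which lies in $D$ because of the hypothesis $O(E_i)\not\subseteq O(F_{i'})$. The only difference is presentational --- you argue directly (feeding the test group into the cover and reading off both inclusions from the single index $k$ with $G^\ast\in O(\tilde E_{j,k})$), whereas the paper phrases the same argument as a proof by contradiction.
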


\begin{proof}
We use a similar argument as in the proof of \ref{defiprop:Pwelldefined}. Assume that the assertion is wrong for some $i$. Then for all $j$ there exists an $\underline{n} \in E_i^{-1}(1)$ such that $\underline{n} \notin \tilde{D}_j$, and in particular $\underline{n} \notin O(\tilde{E}_{j,1}, \tilde{E}_{j,2},\ldots)$.

Now choose mutually distinct primes $p_{\underline{n}}$ for each $\underline{n} \in E_i^{-1}(1)$. Consider a group $G$ with $p_{\underline{n}}$-part $\underline{n}$ for all $\underline{n} \in E_i^{-1}(1)$. Then $G \in O(E_i)$, but $G \notin O(F_{i'})$ for all $i'$, since otherwise $O(E_i) \subseteq O(F_{i'})$. 

Furthermore, $G \notin D(\tilde{E}_{j,1}, \tilde{E}_{j,2},\ldots;\tilde{F}_{j,1}, \tilde{F}_{j,2},\ldots)$ for all $j$, contradicting the prerequisite $D \subset \bigcup_{j}\tilde{D}_j$. This proves the lemma.
\end{proof}

Next we prove that $P$ and $\nu$ coincide on a certain subset of $\DDD.$

\begin{lemma}\label{lem:finite}
Let $D$ be the finite disjoint union of sets 
$$D^{(k)} = D(E_1^{(k)},\ldots,E_{r_k}^{(k)};F_1^{(k)},\ldots,F_{s_k}^{(k)}) \in \DDD$$
such that the fibers $(E_i^{(k)})^{-1}(1)$ are finite for all $i$ and $k$. Then we have 
$$\nu(D) = \sum_kP(D^{(k)}).$$
\end{lemma}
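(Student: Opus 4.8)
The plan is to prove $\nu(D)=\sum_k P(D^{(k)})$ by establishing two inequalities. The easy direction is $\nu(D)\le\sum_k P(D^{(k)})$: since $D=\bigcup_k D^{(k)}$ is itself a (finite) cover of $D$ by sets in $\DDD$, the definition of the infimum immediately gives $\nu(D)\le\sum_k P(D^{(k)})$. The content of the lemma is therefore the reverse inequality $\nu(D)\ge\sum_k P(D^{(k)})$, which says that no countable cover of $D$ by sets of $\DDD$ can do better than the obvious one. First I would fix an arbitrary cover $D\subseteq\bigcup_{j=1}^\infty\tilde D_j$ with $\tilde D_j\in\DDD$ and aim to show $\sum_j P(\tilde D_j)\ge\sum_k P(D^{(k)})$.

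\textbf{Reducing to finitely many covering sets.}
The crucial exploitation of the hypothesis is that each fiber $(E_i^{(k)})^{-1}(1)$ is finite, so $O(E_i^{(k)})$ depends only on finitely many primes in a strong sense. I would first reduce each $D^{(k)}$ to the maximal normal form of Remark~\ref{rem:normalformofD}, arranging $O(F_{i'}^{(k)})\subseteq O(E_i^{(k)})$ and $O(E_i^{(k)})\not\subseteq O(F_{i'}^{(k)})$. The key combinatorial step is to apply Lemma~\ref{lem:onDDD}: for the finiteness of the fibers, each generator $O(E_i^{(k)})$ must be ``captured'' by some $\tilde D_j$ in the sense that $O(E_i^{(k)})\subseteq O(\tilde{\EE}_j)$ while $O(E_i^{(k)})\not\subseteq O(\tilde{\FFF}_j)$. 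Because there are only finitely many generators $E_i^{(k)}$ across all $k$ (the union $D$ is finite, each $D^{(k)}$ has finitely many defining properties), only finitely many covering sets $\tilde D_j$ are relevant to capturing them; I would discard the rest and pass to a finite subcover's worth of combinatorial data.

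\textbf{Passing to the finite prime-truncation $\GG_{\le x}$.}
Having reduced to finitely many sets, I would borrow the device from the proof of Lemma~\ref{lem:disjointunion}: intersect everything with $\GG_{\le x}=\prod_{p\in\PP_{\le x}}\GG_p$, which is a genuine finite product probability space. On this honest probability space, the inclusion $D\cap\GG_{\le x}\subseteq\bigcup_j\tilde D_j\cap\GG_{\le x}$ together with the disjointness of the $D^{(k)}\cap\GG_{\le x}$ yields, by ordinary finite-additivity and subadditivity of the product measure,
\[
\sum_k P_{\le x}(D^{(k)})\;=\;P_{\le x}(D)\;\le\;\sum_j P_{\le x}(\tilde D_j).
\]
Since all the involved fibers are finite and we are dealing with finitely many terms, each $P_{\le x}(\,\cdot\,)\to P(\,\cdot\,)$ as $x\to\infty$ by the relation $P(E)=\lim_{x\to\infty}P_{\le x}(E)$ noted earlier. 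Taking the limit gives $\sum_k P(D^{(k)})\le\sum_j P(\tilde D_j)$, and taking the infimum over all covers yields $\sum_k P(D^{(k)})\le\nu(D)$, completing the reverse inequality.

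\textbf{Where the difficulty lies.}
The main obstacle is the reduction to finitely many covering sets. Subadditivity on the truncated space $\GG_{\le x}$ is automatic, but I must ensure that the finitely many $\tilde D_j$ I retain genuinely cover $D\cap\GG_{\le x}$ for all large $x$, not merely capture the generators $O(E_i^{(k)})$. This is exactly where the finiteness of the fibers $(E_i^{(k)})^{-1}(1)$ is indispensable: it guarantees, via the prime-assignment argument in Lemma~\ref{lem:onDDD} (constructing a group whose $p_{\underline n}$-parts realize every partition in a fiber), that the combinatorial capture translates into actual containment on $\GG_{\le x}$ once $x$ is large enough to supply more primes than there are properties involved. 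I would treat the bookkeeping of "large enough $x$" uniformly, exactly as the parenthetical ``we need more primes than uniform properties involved'' in Lemma~\ref{lem:disjointunion}, so that a single threshold works for the finitely many relevant sets simultaneously.
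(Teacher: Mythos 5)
Your outer frame (easy direction from the definition of $\nu$; reverse direction by bounding $\sum_j P(\tilde D_j)$ from below for an arbitrary countable cover) matches the paper, and the invocation of Lemma \ref{lem:onDDD} to locate a capturing $\tilde D_j$ for each generator $E_i^{(k)}$ is exactly the right first move. But the step ``only finitely many covering sets $\tilde D_j$ are relevant\dots I would discard the rest and pass to a finite subcover'' is a genuine gap, and it is precisely the hard point of the lemma. Lemma \ref{lem:onDDD} only gives you, for each $E_i^{(k)}$, some $j$ with $O(E_i^{(k)})\subseteq O(\tilde\EE_j)$ and $O(E_i^{(k)})\not\subseteq O(\tilde\FFF_j)$. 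This says nothing about the part of $D$ lying inside $O(\tilde\FFF_j)$: the set $\tilde D_j=O(\tilde\EE_j)\setminus O(\tilde\FFF_j)$ misses $D\cap O(\tilde\FFF_j)$ entirely, and that leftover may genuinely require infinitely many of the remaining $\tilde D_{j'}$ to be covered (take $D=O(E)$, $\tilde D_1=O(E)\setminus O(F)$, and cover $O(E)\cap O(F)$ by an essential countable family). Consequently the finitely many retained sets need not cover $D\cap\GG_{\le x}$ for any $x$, the truncated inequality $P_{\le x}(D)\le\sum_{\text{retained}}P_{\le x}(\tilde D_j)$ fails, and your limit argument does not get off the ground. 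Showing that one can pass to ``finitely many relevant sets up to $\epsilon$'' is essentially equivalent to the countable subadditivity you are trying to prove, so the proposal begs the question at this step.

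The paper's proof handles exactly this leftover by induction rather than by a finite subcover. It well-orders the elements of $\DDD$ with finite fibers via strict inclusion of the sets $O(E_1,\ldots,E_r)$, picks the single capturing set $A_{j_0}$ for $E_1$, and writes
$$D\setminus A_{j_0}=\bigl(D\setminus O(\tilde\EE)\bigr)\,\dot\cup\,\bigl(D\cap O(\tilde\FFF)\bigr)=D_1\,\dot\cup\,D_2$$
with $D_1,D_2\in\DDD$ strictly smaller in the ordering. The remaining countably many $A_j$ ($j\ne j_0$) still cover $D_1\cup D_2$, so the induction hypothesis gives $\nu(D_1\cup D_2)=P(D_1)+P(D_2)\le\sum_{j\ne j_0}P(A_j)$, and Lemma \ref{lem:disjointunion} applied to $D\subseteq D_1\cup D_2\cup A_{j_0}$ closes the estimate. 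Your $\GG_{\le x}$-truncation device is already encapsulated inside Lemma \ref{lem:disjointunion} and is not needed again here; what is needed, and what your proposal lacks, is the descent that disposes of the countably infinite tail of the cover.
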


\begin{proof}
We set $D(E_1,\ldots,E_r;F_1,\ldots,F_s) < D(E_1',\ldots,E_r';F_1',\ldots,F_s')$ if and only if $O(E_1,\ldots,E_r) \subsetneq O(E_1',\ldots,E_r')$; in this way, we impose a partial ordering on $\DDD$. It is a well-ordering on sets in $\DDD$ with finite fibers $E_i^{-1}(1)$, and the lexicographic ordering extends this to the set of all finite tuples $(D^{(k)})$ of elements of $\DDD$ with finite fibers $E_i^{-1}(1)$. The lexicographic ordering is still a well-ordering so we may use induction with respect to this ordering. 

For the sake of clarity, I will restrict the proof to the case where we have only one set $D^{(k)}$ and simplify the notation to $D= D(E_1,\ldots,E_r;F_1,\ldots,F_s)$. The extension to a disjoint union is straightforward: Just apply the descending step to the largest (possibly several) $D^{(k)}$'s with respect to the ordering.

As usual we assume that the sets $O(E_i)$ are mutually not contained in each other.

If $D$ is minimal then $D=\emptyset$ and we have $P(D) = 0 = \nu(D)$.

So assume $D\neq \emptyset$. Since $D\in \DDD$ we have $\nu(D) \leq P(D)$. So we only need to show that for any countable familiy $A_j$ in $\DDD$ with $D \subset \bigcup_{j=1}^{\infty} A_j$ we have $P(D) \leq \sum_{j=1}^{\infty}P(A_j)$.

Let $A_j$ be such a family. Consider $E_1$. If $E_1$ is contained in any of the sets $F_1,\ldots,F_s$, then we simply omit it and we are done by induction hypothe\-sis. So assume otherwise. Then by lemma \ref{lem:onDDD} there exists an index $j_0$, $A_{j_0} = D(\tilde{E}_k;\tilde{F}_k) =: D(\tilde{\EE};\tilde{\FFF})$, such that 
\begin{eqnarray*}
O(E_1) & \subseteq & O(\tilde{\EE})\text{, and}\\
O(E_1) & \not\subseteq & O(\tilde{\FFF}).
\end{eqnarray*}

Now consider the set $D_0 := D\setminus A_{j_0}$. We will see that we need to compute the measure of this set. Unfortunately, $D_0$ is not in $\DDD$ in general, but it is the disjoint union of two elements in $\DDD$. Basically we will use the decomposition
$$D_0 = D \setminus A_{j_0} = D\setminus (O(\tilde{\EE}) \setminus O(\tilde{\FFF})) = (\underbrace{D \setminus O(\tilde{\EE})}_{=: D_1}) \dot{\cup} (\underbrace{D \cap O(\tilde{\FFF})}_{=:D_2}).$$

We need to show that $D_1,D_2 \in \DDD$: We write $D_1 = D(\EE_1;\FFF_1)$, where
\begin{eqnarray*}
\EE_1 & := & \{E_2,\ldots,E_r\} \cup \{ E_1\wedge \tilde{F}_{1}, E_1\wedge \tilde{F}_{2}, \ldots\}\text{, and}\\
\FFF_1 & := & \{F_1,\ldots,F_s\} \cup \{\tilde{E_1},\tilde{E_2},\ldots\},
\end{eqnarray*}

and $D_2 = D(\EE_2;\FFF_2)$, where
\begin{eqnarray*}
\EE_2 & := & \{ E_i\wedge \tilde{F}_{i'} \mid i,i' = 1,2,\ldots\}\text{, and}\\
\FFF_2 & := & \{F_1,\ldots,F_s\}.
\end{eqnarray*}

Then $D_1$ and $D_2$ are disjoint with union $D\setminus A_{j_0}$, they have finite fibers and are strictly smaller (in the inductive sense) than $D$, so we may apply the induction hypothesis and conclude $\nu(D_1 \cup D_2) = P(D_1)+P(D_2)$. 

%
%
%

Since $D_1 \cup D_2 = D \setminus A_{j_0}$, we have 
$$D_1 \cup D_2 \subset \bigcup_{\atop{j=1}{j\neq j_0}}^{\infty} A_j,$$
and consequently
$$\nu(D_1 \cup D_2) \leq \sum_{\atop{j=1}{j\neq j_0}}^{\infty}P(A_j).$$

Now we can put everything together: Reusing the formula $D \subseteq D_1 \cup D_2 \cup A_{j_0}$, we see that $P(D) \leq P(D_1) + P(D_2) + P(A_{j_0})$ by lemma \ref{lem:disjointunion}, and therefore

\begin{eqnarray*}
P(D) & \leq & P(D_1) + P(D_2) + P(A_{j_0})\\
& = & \nu(D_1 \cup D_2) + P(A_{j_0})\\
& \leq & \left(\sum_{\atop{j=1}{j\neq j_0}}^{\infty}P(A_j)\right) + P(A_{j_0})\\
& = & \sum_{j=1}^{\infty}P(A_j).
\end{eqnarray*}

This proves $P(D) \leq \nu(D)$, as required.

\end{proof}

Now we are ready to tackle the general case:

\begin{prop}\label{prop:nuP}
For any $D \in \DDD$, we have $\nu(D) = P(D)$.
\end{prop}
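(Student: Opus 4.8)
The plan is to reduce the general statement $\nu(D) = P(D)$ for an arbitrary $D \in \DDD$ to the special case already established in Lemma \ref{lem:finite}, where the defining properties $E_i$ have \emph{finite} fibers $E_i^{-1}(1)$. First I would record the easy inequality $\nu(D) \leq P(D)$, which holds for \emph{every} $D \in \DDD$ simply because $D$ is an admissible single-term cover of itself in the infimum defining $\nu$. The entire work therefore lies in proving the reverse inequality $P(D) \leq \nu(D)$, i.e. that $P(D) \leq \sum_j P(A_j)$ for every countable cover $D \subseteq \bigcup_j A_j$ with $A_j \in \DDD$.

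The key idea is approximation by finite-fiber sets. Write $D = D(E_1,\ldots,E_r;F_1,\ldots,F_s)$. A uniform property $E$ corresponds to a subset $E_{\GG_P}^{-1}(\{1\}) \subseteq \GG_P$ of partitions, and by Lemma \ref{lem:partgrowth} the number of partitions of size $n$ grows only like $O(\phi^n)$, while the local measure $P_p$ decays geometrically; consequently the ``tail'' of any fiber contributes arbitrarily little to each local probability and hence to the product $P(E)$. Concretely, I would for each $E_i$ define a truncated property $E_i^{(N)}$ whose fiber is $E_i^{-1}(\{1\})$ restricted to partitions of size $\leq N$ (so $E_i^{(N)}$ has finite fibers), and set $D^{(N)} := D(E_1^{(N)},\ldots,E_r^{(N)};F_1,\ldots,F_s)$. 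One checks that $D^{(N)} \subseteq D$, that $D^{(N)} \in \DDD$ with finite $E$-fibers, and — using the two-sided bound $-2h \leq \log(1-h) \leq -h$ from the preliminaries together with the partition growth estimate — that $P(D^{(N)}) \to P(D)$ as $N \to \infty$. The growth/decay comparison is what guarantees the products and their truncations converge, and that truncating the $O(E_i)$ only omits a set of arbitrarily small $P$-value.

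Granting this approximation, the reverse inequality follows cleanly. Given any countable cover $D \subseteq \bigcup_j A_j$, the smaller set $D^{(N)}$ satisfies $D^{(N)} \subseteq \bigcup_j A_j$ as well, so Lemma \ref{lem:finite} (applied to the finite-fiber set $D^{(N)}$, which is trivially a ``finite disjoint union'' of one such set) gives
\begin{equation*}
P(D^{(N)}) = \nu(D^{(N)}) \leq \sum_{j=1}^{\infty} P(A_j).
\end{equation*}
Letting $N \to \infty$ and using $P(D^{(N)}) \to P(D)$ yields $P(D) \leq \sum_j P(A_j)$. Taking the infimum over all covers gives $P(D) \leq \nu(D)$, and combined with the trivial inequality we conclude $\nu(D) = P(D)$.

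The main obstacle I anticipate is the convergence claim $P(D^{(N)}) \to P(D)$, since $P$ on $\DDD$ is defined through an inclusion–exclusion alternating sum (Definition/Proposition \ref{defiprop:Pwelldefined}), and one must control how truncating the fibers propagates through all the $P(O(\bigwedge_{E \in S} E))$ terms simultaneously. The honest way to handle this is to pass to the finite stages $\GG_{\leq x}$ as in Lemma \ref{lem:disjointunion}, where each $P_{\leq x}$ is a genuine probability on a finite product space so that monotone convergence of the truncated measurable sets is automatic, and then interchange the $N \to \infty$ and $x \to \infty$ limits using the uniform tail bound supplied by Lemma \ref{lem:partgrowth} and the geometric decay of $P_p$. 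Making this double-limit argument rigorous — rather than the finite-fiber reduction itself — is where the real care is required.
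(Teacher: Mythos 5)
Your proposal is correct and follows essentially the same route as the paper's proof: the trivial inequality $\nu(D)\leq P(D)$, approximation of $D(\EE;\FFF)$ by versions with finite fibers obtained by truncating the $E_i$ at bounded partition size, the convergence $P(D^{(N)})\to P(D)$ (this is exactly the paper's equation \eqref{eq:finiteexhaustion}, proved there via the partition-growth bound of lemma \ref{lem:partgrowth}, the geometric decay of $P_p$, and the uniform-in-$p$ estimate $P_p(E)\leq P_p(E')+p^{-n_0}$ combined with the lower bound from lemma \ref{lem:positiveprob}), and finally the application of lemma \ref{lem:finite} to the truncated set inside an arbitrary countable cover followed by passage to the limit. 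The only cosmetic difference is that the paper proves the approximation step by a direct product estimate over all primes rather than by your suggested interchange of limits over the finite stages $\GG_{\leq x}$, but the underlying analytic input is the same.
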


\begin{proof}
Let $D = D(E_1,\ldots,E_r;F_1,\ldots,F_s)$. Let $\EE$ be the $r$-tuple $(E_1,\ldots,E_r)$ and let $\FFF$ be the $s$-tuple $(F_1,\ldots,F_s)$. In the following, $\EE'$ will always denote an $r$-tuple of uniform properties that is finite in the sense that the fibers $E'^{-1}(1)$ are finite for all properties $E'$ in $\EE'$. We shall write $\EE' \leq \EE$ if for all $1\leq i \leq r$ we have $O(E'_i) \subseteq O(E_i)$.

The crucial step in this proof is to show

\begin{equation}\label{eq:finiteexhaustion}P(D) = \sup_{\EE' \leq \EE \text{ finite}} P(D(\EE',\FFF)).\end{equation}

The inequality ``$\geq$'' is trivial. For the other direction, note that for any finite $\EE' \leq \EE$, we have

$$P(D(\EE,\FFF)) - P(D(\EE',\FFF)) \leq P(O(\EE)) - P(O(\EE')).$$

Therefore, it suffices to show that $P(O(\EE)) = \sup_{\EE'} P(O(\EE'))$.

Furthermore, it suffices to consider the case $r=1$ (i.e., $\EE$ consists of only one uniform property), because by the Inclusion-Exclusion Formula $P(O(\EE))$ can be computed as a finite sum (with signs) from values $P(O(E))$, where $E$ is a single uniform property.

Altogether, we need to show that for each uniform property $E$, we have

$$P(O(E)) = \sup_{E' \leq E \text{ finite}} P(O(E')),$$

where ``$E' \leq E$ finite'' means that $E'^{-1}(1)\subseteq E^{-1}(1)$ and  $E'^{-1}(1)$ is finite.

We may assume that $P(O(E)) > 0$, otherwise the statement is trivial. 

%
Let us look at the local situation: Let $p\in\PP$ and let $n_0\in\NN$. For any $n\in \NN^+$, it is possible to choose $E' \leq E$ finite such that $w_p(E) \leq w_p(E') + \sum_{i=n}^{\infty}a_iq^i$ ($a_i =$ number of partitions of $i$) \emph{as power series}, i.e., coefficient-wise. 

By lemma \ref{lem:partgrowth} we know that $a_i \in O(\phi^i)$, where $\phi = 1.618\ldots$ is the golden ratio. There exists a constant $d<1$ (e.g., $d:=0.7$) such that $2^d > \phi$ and such that $2^{4-d} > 2^3+1$. Then it is easy to see that for all primes $p$ we have $p^{4-d} > p^3+1$. By choosing $n$ large enough, we may further assume that $a_i \leq 2^{di-n_0-3}$ for all $i\geq n$. Then in particular $a_i \leq p^{di-n_0-3}$ for all primes $p$. Also by lemma \ref{lem:positiveprob}, we may assume that $P(O(E')) \geq c$ for some $c>0$, and therefore also $P_p(E') \geq P(O(E')) \geq c$ for all $p\in\PP$.

Then we have 
\begin{eqnarray*} 
w_p(E) & \leq & w_p(E') + \sum_{i=n}^{\infty}a_iq^{i} \\
&\leq & w_p(E') + \sum_{i=n}^{\infty}p^{di-n_0-3}p^{-i} \\
&= & w_p(E') + p^{-n_0-3}\sum_{i=n}^{\infty}p^{(d-1)i} \\
&= & w_p(E') + p^{-n_0}p^{-3}\frac{p^{n(d-1)}}{1-p^{d-1}}\\
&= & w_p(E') + p^{-n_0}\frac{p^{(n-1)(d-1)}}{p^{4-d}-p^3}\\
& \leq & w_p(E') + p^{-n_0},
\end{eqnarray*} 

where in the last inequality we use that the fraction has numerator $\leq 1$ and denominator $\geq 1$.

For the probability, we must multiply with $\prod_{i=0}^{\infty} (1-p^{-i})$:

$$P_p(E) \leq P_p(E') + p^{-n_0}\prod_{i=0}^{\infty} (1-p^{-i}) \leq P_p(E') + p^{-n_0}$$

Since our choice of $E'$ and of $n$ was independent of $p$, the analysis works for all $p$. Putting this together, we get

\begin{eqnarray*} 
P(E) & = & \prod_{p\in\PP} P_p(E)\\
& \leq & \prod_{p\in\PP} \left(P_p(E') + p^{-n_0}\right)\\
& = & \left(\prod_{p\in\PP} P_p(E')\right) \prod_{p\in\PP}\left(1+\frac{p^{-n_0}}{P_p(E')}\right)\\
& \leq & \left(\prod_{p\in\PP} P_p(E')\right) \prod_{p\in\PP}\left(1+\frac{1}{c}p^{-n_0}\right)\\
& \leq & \left(\prod_{p\in\PP} P_p(E')\right)\left(1 + \sum_{p\in\PP}\left(\frac{1}{c}p^{-n_0}\right)\right)
\end{eqnarray*} 

\begin{eqnarray*} 
\phantom{P(E)} & = & P(E')\left(1 + \frac{1}{c}\underbrace{\sum_{p\in\PP}p^{-n_0}}_{\to 0 \text{ for } n_0 \to \infty}\right)\\
& \stackrel{n_0\rightarrow \infty}{\rightarrow} & P(E').
\end{eqnarray*} 

This proves equation \eqref{eq:finiteexhaustion}.\medskip

Now let $A_j\in\DDD$ be a countable family with $D \subseteq \bigcup_{j=1}^{\infty} A_j$. We need to show that $P(D) \leq \sum_{j=1}^{\infty}P(A_j)$.

Recall that $D=D(\EE,\FFF)$. Let $\EE' \leq \EE$ be finite. Then $D(\EE',\FFF) \subseteq D \subseteq  \bigcup_{j=1}^{\infty} A_j$, so by lemma \ref{lem:finite}, we have \vspace{-1ex}

$$P(D(\EE',\FFF)) \leq \sum_{j=1}^{\infty}P(A_j).$$\vspace{-1ex}

Therefore,\vspace{-1ex}

$$P(D) \stackrel{\ref{eq:finiteexhaustion}}{=} \sup_{\EE' \leq \EE \text{ finite}} P(D(\EE',\FFF)) \leq \sum_{j=1}^{\infty}P(A_j),$$

which finishes the proof.
\end{proof}\pagebreak[2]

For the last step, we use

\begin{theorem}[Carath\'{e}odory]
Let $X$ be some space with outer measure $\nu$. We call a set $A\subseteq X$ \emph{measurable}, if for all $B\subseteq X$ we have

$$\nu(B) = \nu(B\setminus A) + \nu(B\cap A).$$

Then the set of all measurable sets is a $\sigma$-algebra, and $\nu$ is a measure when restricted to measurable sets. 
\end{theorem}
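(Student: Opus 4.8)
The plan is to verify directly from the three axioms of an outer measure---normalization $\nu(\emptyset)=0$, monotonicity, and countable subadditivity---that the collection $\mathcal{M}$ of Carath\'eodory-measurable sets is closed under complementation and countable union, and that $\nu$ restricted to $\mathcal{M}$ is countably additive. The single observation that drives the entire argument is that subadditivity already gives $\nu(B)\leq\nu(B\cap A)+\nu(B\setminus A)$ for every $A$ and every test set $B$; hence to prove a set $A$ measurable it suffices to establish only the reverse inequality $\nu(B)\geq\nu(B\cap A)+\nu(B\setminus A)$. Every step below is therefore phrased as an inequality.

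First I would dispose of the structural closure properties that make $\mathcal{M}$ an algebra. The empty set is measurable since $\nu(B\cap\emptyset)+\nu(B\setminus\emptyset)=\nu(\emptyset)+\nu(B)=\nu(B)$, and $\mathcal{M}$ is closed under complementation because the defining identity is symmetric in $A$ and $X\setminus A$ (passing to the complement merely interchanges $B\cap A$ and $B\setminus A$). For finite unions, given $A_1,A_2\in\mathcal{M}$ and a test set $B$, I would apply the measurability of $A_1$ to $B$ and then the measurability of $A_2$ to the piece $B\setminus A_1$, obtaining $\nu(B)=\nu(B\cap A_1)+\nu((B\setminus A_1)\cap A_2)+\nu(B\setminus(A_1\cup A_2))$; since $B\cap(A_1\cup A_2)$ is covered by the first two pieces, subadditivity collapses them and yields the reverse inequality for $A_1\cup A_2$. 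Together with complementation this shows $\mathcal{M}$ is closed under finite unions, intersections, and differences.

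The heart of the proof is the passage from finite to countable unions, and this is where I expect the main work to lie. I would first reduce to the disjoint case by replacing each $A_i$ with $A_i\setminus(A_1\cup\cdots\cup A_{i-1})$, which lies in $\mathcal{M}$ because $\mathcal{M}$ is an algebra and which leaves the union unchanged. A short induction using the measurability of the partial unions $B_n=A_1\cup\cdots\cup A_n$ then proves the additivity relation $\nu(B\cap B_n)=\sum_{i=1}^n\nu(B\cap A_i)$ for every test set $B$. Writing $A=\bigcup_i A_i$ and splitting $\nu(B)=\nu(B\cap B_n)+\nu(B\setminus B_n)$, I would bound $\nu(B\setminus B_n)\geq\nu(B\setminus A)$ by monotonicity (since $B_n\subseteq A$), let $n\to\infty$, and finally apply countable subadditivity to $\bigcup_i(B\cap A_i)$ to conclude $\nu(B)\geq\nu(B\cap A)+\nu(B\setminus A)$. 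Hence $A\in\mathcal{M}$ and $\mathcal{M}$ is a $\sigma$-algebra.

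It then remains only to record countable additivity of $\nu$ on $\mathcal{M}$, which falls out of the same computation: specializing the test set to $B=A$ in the inequality just obtained gives $\nu(A)\geq\sum_i\nu(A_i)$, while the opposite inequality is countable subadditivity, so the two agree. The one subtlety to keep in mind throughout is never to invoke additivity of $\nu$ on arbitrary sets, since that property is available only after a set has been certified measurable; disciplined use of monotonicity and subadditivity until measurability is secured is exactly what makes the argument go through.
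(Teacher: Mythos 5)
Your proof is correct: every step (the reduction to the single inequality $\nu(B)\geq\nu(B\cap A)+\nu(B\setminus A)$, closure under complements and finite unions, the disjointification, the inductive identity $\nu(B\cap B_n)=\sum_{i=1}^n\nu(B\cap A_i)$, the limit argument via monotonicity and countable subadditivity, and the specialization $B=A$ for countable additivity) is sound and in the right logical order. The paper itself does not prove this classical theorem but merely cites Halmos's \emph{Measure Theory}, and your argument is precisely the standard Carath\'eodory proof found in that reference, so there is nothing to compare beyond noting that you have supplied in full what the paper delegates to the literature.
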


\begin{proof}
\cite{Hal50}
\end{proof}

So we only need to show that all uniform properties are measurable (in the sense of Carath\'{e}odory):

\begin{prop}
Let $E$ be a uniform property. Then $O(E)$ is measurable.
\end{prop}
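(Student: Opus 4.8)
The plan is to verify the Carathéodory condition
$$\nu(B) = \nu(B\setminus O(E)) + \nu(B\cap O(E)) \qquad\text{for every } B\subseteq\GG.$$
Since $\nu$ is an outer measure and $B = (B\cap O(E))\cup(B\setminus O(E))$, the inequality ``$\le$'' is automatic from $\Sigma$-subadditivity. Hence everything reduces to proving the reverse inequality $\nu(B)\ge \nu(B\setminus O(E)) + \nu(B\cap O(E))$.

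First I would start from the definition of $\nu(B)$ as an infimum: fix an arbitrary countable family $A_j\in\DDD$ with $B\subseteq\bigcup_j A_j$, and split each covering set along $O(E)$. The first key observation is that both pieces stay inside $\DDD$. Writing $A_j = D(E_1,\ldots,E_r;F_1,\ldots,F_s)$, the intersection-with-$O$ formula from the remarks gives
$$A_j\cap O(E) = D(E_1\wedge E,\ldots,E_r\wedge E; F_1,\ldots,F_s)\in\DDD.$$
For the complementary piece, let $\mathbf{1}$ denote the constant uniform property with $\mathbf{1}(G)=1$ for all $G$, so that $O(\mathbf{1})=\GG$ and $\GG\setminus O(E) = D(\mathbf{1};E)$. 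Then $A_j\setminus O(E) = A_j\cap D(\mathbf{1};E)$ lies in $\DDD$ because $\DDD$ is closed under intersection.

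Next I would exploit that $A_j\cap O(E)$ and $A_j\setminus O(E)$ are disjoint with union $A_j$, so Lemma \ref{lem:disjointunion}(1), applied with $n=2$ and $D_0=A_j$, yields
$$P(A_j\cap O(E)) + P(A_j\setminus O(E)) \le P(A_j).$$
Summing over $j$ and using that $\{A_j\cap O(E)\}_j$ covers $B\cap O(E)$ while $\{A_j\setminus O(E)\}_j$ covers $B\setminus O(E)$ --- both by sets in $\DDD$ --- the definition of $\nu$ gives
$$\sum_j P(A_j) \ge \sum_j P(A_j\cap O(E)) + \sum_j P(A_j\setminus O(E)) \ge \nu(B\cap O(E)) + \nu(B\setminus O(E)).$$
Taking the infimum over all covers $\{A_j\}$ of $B$ produces $\nu(B)\ge \nu(B\cap O(E)) + \nu(B\setminus O(E))$, which together with subadditivity finishes the proof.

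The real content is not in the Carathéodory bookkeeping, which is the textbook splitting argument. The two substantive inputs are purely structural: (i) the closure of $\DDD$ under intersection together with the two explicit formulas realizing $A_j\cap O(E)$ and $A_j\setminus O(E)$ as members of $\DDD$, which is what lets me re-use the same covering machinery for both pieces; and (ii) the finite super-additivity of Lemma \ref{lem:disjointunion}(1), which upgrades the set-theoretic disjoint decomposition of $A_j$ into the numerical inequality $P(A_j\cap O(E))+P(A_j\setminus O(E))\le P(A_j)$. The only points I expect to need care are confirming that $\mathbf{1}$ genuinely is a uniform property (its local function is identically $1$, hence trivially of the required product form) and that the normal-form hypothesis $O(\FFF)\subseteq O(\EE)$ needed to evaluate $P$ on the split pieces can be arranged via Remark \ref{rem:normalformofD}; both are routine given the earlier lemmas.
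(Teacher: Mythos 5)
Your proof is correct and follows essentially the same route as the paper: split each covering set $A_j$ along $O(E)$, observe that both pieces remain in $\DDD$, and sum over the cover. The only cosmetic difference is that you obtain the key inequality $P(A_j\cap O(E))+P(A_j\setminus O(E))\le P(A_j)$ directly from Lemma \ref{lem:disjointunion}(1), whereas the paper states it as an equality citing Proposition \ref{prop:nuP}; your version suffices, and your explicit verification that $A_j\setminus O(E)=A_j\cap D(\mathbf{1};E)\in\DDD$ spells out a detail the paper leaves implicit.
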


\begin{proof}
Let $A \subseteq \GG$. We need to show that $\nu(A) = \nu(A\setminus O(E)) + \nu(A\cap O(E)).$

Since $\nu$ is subadditive (as outer measure), we only need to show the direction $$\nu(A) \geq \nu(A\setminus O(E)) + \nu(A\cap O(E)).$$

Let $A_i \in \DDD$ be a family such that $A \subseteq \bigcup_{i=1}^{\infty} A_i$. By definition of $\nu$, it suffices to show that for any such family

$$\sum_{i=1}^{\infty} P(A_i) \geq \nu(\underbrace{A\setminus O(E)}_{=:B}) + \nu(\underbrace{A\cap O(E)}_{=: C}).$$

Since $A_i\in \DDD$, we also have $B_i := A_i \setminus O(E) \in \DDD$ and $C_i := A_i \cap O(E) \in \DDD$. Therefore, by proposition \ref{prop:nuP}, we have $\nu(B_i) = P(B_i)$, $\nu(C_i) = P(C_i)$, and $P(A_i) = P(B_i)+P(C_i)$.

Clearly the $B_i$ cover $B$, and the $C_i$ cover $C$, so by definition of $\nu$
\begin{eqnarray*}
\nu(B) & \leq & \sum_{i=1}^{\infty} P(B_i)\text{, and}\\
\nu(C) & \leq & \sum_{i=1}^{\infty} P(C_i).
\end{eqnarray*}

Putting things together, we obtain
\begin{eqnarray*}
\sum_{i=1}^{\infty} P(A_i) & = & \sum_{i=1}^{\infty} (P(B_i) + P(C_i)) \\
& = & \sum_{i=1}^{\infty} P(B_i) + \sum_{i=1}^{\infty} P(C_i) \\
& \geq & \nu(B) + \nu(C),
\end{eqnarray*}

as required.

\end{proof}

So we have successfully concluded the proof and shown that the Cohen-Lenstra measure (def. \ref{def:globalmeasure}) is indeed a well-defined probability measure.

\section{Modifications of the global measure}

There are some important applications of the Cohen-Lenstra heuristic where we need to exclude certain primes. E.g., for quadratic number fields we need to exclude $p=2$. In this case, we proceed as follows: We consider the set $\GG^{\neq 2}$ of all finite abelian groups with trivial $2$-part and modify our definition of uniform properties to these groups. It is clear that all our proofs work also for $\GG^{\neq 2}$ instead of $\GG$, so we get a probability measure on $\GG^{\neq 2}$ that makes all (modified) uniform properties measurable.

Then either we stop at this point and do not make any statements about groups with non-trivial $2$-part. In this case we often replace a random $G$ by $G/G_2$, where $G_2$ is the $2$-part of $G$. Or, if we are given a probability measure on the set $\GG_2$ of all finite abelian $2$-groups, then we take the product space of $\GG^{\neq 2}$ and $\GG_2$ and obtain automatically a probability measure on the product space. Candidates for such probability measures for ``bad'' primes are known for number fields (cf.\! \cite{Mal08} or \cite[sect. 6.1.2]{Len09}).

Of course, all this applies also to other primes than $p=2$, and also to a finite number of primes. 

CAUTION: We get a different probability space for each finite set of primes, and \emph{those probability spaces are not compatible}. As we have seen in section \ref{sect:content}, there is no rich probability measure whose $\sigma$-algebra would make all projections $\GG \to \GG_p$ continuous. 

So there are no objections against ruling out some bad primes in a number field situation (in the sense above), since these primes are fixed. But if you fix one situation and make statements about the $p$-parts of the class groups for various $p$ (as it is often done, e.g.\! in \cite{CL83}), then you must be extremely careful, because our analysis above has shown that you will \emph{inevitably lose countable additivity}. Therefore, the interpretation as probabilities is not valid in this context! Unfortunately, this point is usually ignored in the literature.

A more general way of extending uniform properties is to split up the primes into a finite number of subsets, e.g., into $\PP_1 :=\{p\in\PP \mid p \equiv 1 \bmod 4\}$,  $\PP_2:=\{2\}$, and $\PP_3 :=\{p\in\PP \mid p \equiv 3 \bmod 4\}$. Then we may define uniform properties for each of the sets $\GG_{\PP_1}$, $\GG_{\PP_2}$, and $\GG_{\PP_3}$ (in the obvious way), and by combining them we obtain a probability measure on $\GG$ that is an extension of the probability measure we have defined in the preceding sections. In this way, we may formulate equidistribution statements for congruence classes of primes. However, we have the same restriction as we have when taking out finitely many primes: Each partition of the set $\PP$ yields its own probability measure, and combining more than finitely many of them will eventually result in losing the countable additivity. So statements must still be formulated carefully.\bigskip

Another extension is obvious from measure theory: Of course, we are not restricted to measuring \emph{properties}, but we may measure any measurable function, which includes measuring expected values, higher moments of random variables, and so on. This seems like a trivial remark, but so far it has been an unsolved problem which quantities to consider in the Cohen-Lenstra context. Cohen and Lenstra declared that we should take ``reasonable'' functions without specifying what ``reasonable'' means, and this handwaving concept was adapted in basically all subsequent papers. By our preparatory work, we get the solution for this problem for free from measure theory.

For convenience, let me explicitly state what it means for a sequence of groups to be random (more precisely: equidistributed) with respect to the Cohen-Lenstra measure:

\begin{definition}\label{def:randomglobal}
Let $(G_i)_{i=1}^{\infty}$ be a sequence of finite abelian groups. Let $\Sigma$ be the $\sigma$-algebra on $\GG$ generated by uniform properties and let $\mu$ be the probability measure on $\Sigma$ as defined in \ref{def:globalmeasure}. We say that $G_i$ \emph{behaves as a random sequence} or \emph{is equidistributed with respect to the Cohen-Lenstra measure}\index{equidistributed sequence} if for all measurable functions $f:\GG\rightarrow \CC$ we have

$$\lim_{n\to \infty}\frac{\sum_{i=1}^n f(G_i)}{n} = \int_{\GG}fd\mu.$$ 
\end{definition}

\begin{acknowledgements}
This paper has evolved from my PhD-thesis. I want to thank my thesis advisor, Ernst-Ulrich Gekeler, for his guidance and his aid.
\end{acknowledgements}


\bibliographystyle{amsalpha}
\bibliography{global_CL-heuristic}

\providecommand{\bysame}{\leavevmode\hbox to3em{\hrulefill}\thinspace}
\providecommand{\MR}{\relax\ifhmode\unskip\space\fi MR }
\providecommand{\MRhref}[2]{%
  \href{http://www.ams.org/mathscinet-getitem?mr=#1}{#2}
}
\providecommand{\href}[2]{#2}
\begin{thebibliography}{Mun53}

\bibitem[Ach06]{Ach06}
Jeffrey~D. Achter, \emph{The distribution of class groups of function fields},
  Journal of Pure and Applied Algebra \textbf{204} (2006), no.~2, 316--333.

\bibitem[AE04]{AE04}
George~E. Andrews and Kimmo Eriksson, \emph{Integer partitions}, Cambridge
  University Press, Cambridge, 2004. \MR{2122332 (2006b:11125)}

\bibitem[BT24]{BT24}
Stefan Banach and Alfred Tarski, \emph{Sur la d\'ecomposition des ensembles de
  points en parties respectivement congruentes}, Fund. Math. (1924), no.~6,
  244--277.

\bibitem[CL84]{CL83}
H.~Cohen and H.~W. Lenstra, Jr., \emph{Heuristics on class groups of number
  fields}, Number theory, {N}oordwijkerhout 1983 ({N}oordwijkerhout, 1983),
  Lecture Notes in Math., vol. 1068, Springer, Berlin, 1984, pp.~33--62.
  \MR{756082 (85j:11144)}

\bibitem[Gek06]{Gek06}
Ernst-Ulrich Gekeler, \emph{The distribution of group structures on elliptic
  curves over finite prime fields}, Doc. Math. \textbf{11} (2006), 119--142
  (electronic). \MR{2226271 (2007b:11143)}

\bibitem[Hal50]{Hal50}
Paul~R. Halmos, \emph{Measure {T}heory}, D. Van Nostrand Company, Inc., New
  York, N. Y., 1950. \MR{0033869 (11,504d)}

\bibitem[Len09]{Len09}
Johannes Lengler, \emph{{T}he {C}ohen-{L}enstra {H}euristic for {F}inite
  {A}belian {G}roups}, Ph.D. thesis, Universit{\"a}t des {S}aarlandes,
  Saarbr{\"u}cken, Germany, 2009.

\bibitem[Mal06]{Mal06}
Gunter Malle, \emph{The totally real primitive number fields of discriminant at
  most {$10\sp 9$}}, Algorithmic number theory, Lecture Notes in Comput. Sci.,
  vol. 4076, Springer, Berlin, 2006, pp.~114--123. \MR{2282919 (2007j:11179)}

\bibitem[Mal08]{Mal08}
\bysame, \emph{Cohen-{L}enstra heuristic and roots of unity}, J. Number Theory
  \textbf{128} (2008), no.~10, 2823--2835. \MR{2441080}

\bibitem[Mun53]{Mun53}
M.~E. Munroe, \emph{Introduction to measure and integration}, Addison-Wesley
  Publishing Company, Inc., Cambridge, Mass., 1953. \MR{0053186 (14,734a)}

\bibitem[Pac04]{Pac04}
Allison~M. Pacelli, \emph{Abelian subgroups of any order in class groups of
  global function fields}, J. Number Theory \textbf{106} (2004), no.~1, 26--49.
  \MR{2029780 (2004m:11193)}

\bibitem[Wag93]{Wag93}
Stan Wagon, \emph{The {B}anach-{T}arski paradox}, Cambridge University Press,
  Cambridge, 1993, With a foreword by Jan Mycielski, Corrected reprint of the
  1985 original. \MR{1251963 (94g:04005)}

\end{thebibliography}

\end{document}